\numberwithin{equation}{section}       
\numberwithin{figure}{section}       
\theoremstyle{plain}
\newtheorem{prop}{Proposition}[section]
\newtheorem{lemm}[prop]{Lemma}
\newtheorem{theoalph}{Theorem}
\newtheorem{propalph}[theoalph]{Proposition}
\theoremstyle{definition}
\theoremstyle{remark}
\newtheoremstyle{citing}
  {3pt}
  {3pt}
  {\itshape}
  {}
  {\bfseries}
  {.}
  {.5em}
  {\thmnote{#3}}
\theoremstyle{citing}
\newtheorem*{generic}{}
\newcommand{\partn}[1]{{\smallskip \noindent \textbf{#1.}}}
\DeclareMathAlphabet{\mathpzc}{OT1}{pzc}{m}{it} 
\newcommand{\C}{\mathbb{C}}
\newcommand{\N}{\mathbb{N}}
\newcommand{\R}{\mathbb{R}}
\newcommand{\cO}{\mathcal{O}}
\newcommand{\cT}{\mathcal{T}}
\newcommand{\sA}{\mathscr{A}}
\newcommand{\sM}{\mathscr{M}}
\newcommand{\sS}{\mathscr{S}}
\newcommand{\hJ}{\widehat{J}}
\newcommand{\hK}{\widehat{K}}
\newcommand{\hW}{\widehat{W}}
\newcommand{\teta}{\widetilde{\teta}}
\renewcommand{\=}{ : = }
\DeclareMathOperator{\diam}{diam}
\DeclareMathOperator{\Crit}{Crit} 
\DeclareMathOperator{\per}{per}
\newcommand{\chiinf}{\chi_{\inf}}
\newcommand{\chiinff}{\chi_{\inf}(f)}
\newcommand{\chiper}{\chi_{\per}}
\newcommand{\whm}{\widehat{m}}
\newcommand{\fat}{f_{a, \tau}}
\newcommand{\Kat}{K_{a, \tau}}
\newcommand{\Vat}{V_{a, \tau}}
\newcommand{\vat}{v_{a, \tau}}
\newcommand{\lambdaat}{\lambda_{a, \tau}}
\begin{document}

\title[Asymptotic expansion of maps with disconnected Julia set]{On the asymptotic expansion of maps with disconnected Julia set}
\author{Juan Rivera-Letelier}
\address{Juan Rivera-Letelier, Facultad de Matem{\'a}ticas, Pontificia Universidad Cat{\'o}lica de Chile, Avenida Vicu{\~n}a Mackenna~4860, Santiago, Chile}
\email{riveraletelier@mat.puc.cl}

\begin{abstract}
We study the asymptotic expansion of smooth one-dimensional maps.
We give an example of an interval map for which the optimal shrinking of components exponential rate is not attained for any neighborhood of a certain fixed point in the boundary of a periodic Fatou component.
As a complement, we prove a general result asserting that when this happens the components do shrink exponentially, although the rate is not the optimal one.
Finally, we give an example of a polynomial with real coefficients, such that all its critical points in the complex plane are real, and such that its asymptotic expansion as a complex map is strictly smaller than its asymptotic expansion as a real map.
\end{abstract}

\maketitle

\section{Introduction}

This note is concerned with non-uniform hyperbolicity notions for smooth one-dimensional maps.
The Fatou and Julia sets of such a map are important in what follows.
See~\cite{CarGam93,Mil06} for expositions in the complex case.
For a non-degenerate smooth interval map we consider the theory of Fatou and Julia sets developed by Martens, de Melo, and van Strien in~\cite{MardMevSt92}, see also~\cite{dMevSt93}.

For a smooth one-dimensional map, several ways to measure the asymptotic expansion coincide, see~\cite{NowSan98,PrzRivSmi03,Riv1204}.
This note is mainly focused on the asymptotic expansion of interval maps with disconnected Julia set.
We show that to obtain the optimal shrinking of components exponential rate, in some cases it is necessary to exclude intervals that intersect a periodic Fatou component (Proposition~\ref{p:exceptional ESC}).
This shows that a seemingly technical hypothesis of~\cite[Main Theorem']{Riv1204} is in fact necessary.
We also show a qualitative result that holds without this hypothesis (Theorem~\ref{t:qualitative asymptotic expansion}).

We also study the relation between the asymptotic expansions of the real and the complex map defined by a polynomial with real coefficients.
We give an example of a polynomial with real coefficients, such that all its critical points in~$\C$ are real, and such that its asymptotic expansion as a map of~$\C$ is different from its asymptotic expansion as a map of~$\R$ (Proposition~\ref{p:real versus complex}).

We proceed to state our results more precisely.
\subsection{Exponential rate of shrinking of components}
\label{ss:exponential shrinking rate}
Let~$I$ be a compact interval of~$\R$.
A non-injective smooth map~$f : I \to I$ is \emph{non-degenerate}, if the set of points at which the derivative of~$f$ vanishes is finite and if at each of these points a higher order derivative of~$f$ is non-zero.
In what follows, a connected component of the Fatou set of~$f$ is called \emph{Fatou component}.

For a non-degenerate smooth map~$f : I \to I$, the number
$$ \chiinff 
\= 
\left\{ \int |Df| d \mu : \mu \text{ probability measure on~$J(f)$ invariant by~$f$} \right\} $$
is a measure of the asymptotic expansion of~$f$.
The condition~$\chiinff > 0$ can be seen as a strong form of non-uniform hyperbolicity in the sense of Pesin and has strong implications for the dynamics of~$f$: It implies the existence of an exponentially mixing absolutely continuous invariant measure, see~\cite[Corollary~A]{Riv1204}, and also~\cite[Theorem~A]{NowSan98} for maps with one critical point.

When~$f$ is topologically exact on~$J(f)$, several numbers that measure the asymptotic expansion of~$f$ coincide with~$\chiinff$.
For example, for every sufficiently small interval~$J$ contained in~$I$ that is not a neighborhood of a periodic point in the boundary of a Fatou component, we have
\begin{multline}
  \label{e:optimal ESC}
 \lim_{n \to + \infty} \frac{1}{n} \ln \max \left\{ |W| : W \text{ connected component of~$f^{-n}(J)$} \right\}
\\ =
- \chiinff,
\end{multline}
see~\cite[Main Theorem']{Riv1204}.
We note that the hypothesis that~$J$ is not a neighborhood of a periodic point in the boundary of a Fatou component is only required in~\cite[Main Theorem']{Riv1204} in the case where~$\chiinff > 0$ and where~$J(f)$ is not an interval.
Our first result is that~\eqref{e:optimal ESC} can fail if this hypothesis is not satisfied.
\begin{propalph}
  \label{p:exceptional ESC}
There is a non-degenerate smooth map~$f : I \to I$ that is topologically exact on its Julia set, and that satisfies~$\chiinff > 0$ and the following property: There is a fixed point~$p$ of~$f$ in the boundary of a Fatou component of~$f$, such that for every~$\delta > 0$ we have
\begin{multline*}
  \label{e:non-optimal ESC}
 \limsup_{n \to + \infty} \frac{1}{n} \ln \max \left\{ |W| : W \text{ connected component of~$f^{-n}(B(p, \delta))$} \right\}
\\ >
- \chiinff.
\end{multline*}
\end{propalph}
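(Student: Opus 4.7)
The plan is to exhibit a non-degenerate smooth interval map $f : I \to I$ with the following features: (i) an attracting fixed point $q$ whose immediate basin $V$ is a Fatou component; (ii) a repelling fixed point $p \in \partial V$; (iii) a non-degenerate critical point $c \in J(f)$ of local order $d \geq 2$ with $f^N(c) = p$ for some $N \geq 1$; (iv) $f|_{J(f)}$ topologically exact; and (v) $\chiinff = \ln|Df(p)|$. Such an $f$ can be realised as a real cubic polynomial (restricted to an invariant interval, and extended smoothly outside if needed) whose two critical points are tuned independently so that one falls into the immediate basin of $q$ while $c$ is eventually mapped onto $p$; this Misiurewicz-type constraint picks out a countable subfamily inside a one-parameter slice of cubics. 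Within that family, topological exactness of the (Cantor) Julia set is generic, and by further tuning so that $|Df(p)|$ is small relative to the other periodic multipliers one can force $p$ to attain the minimum modulus of the derivative among all periodic orbits in $J(f)$.

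Given this construction, one exhibits a distinguished family of preimages of $B(p,\delta)$ that shrinks strictly more slowly than $e^{-n\chiinff}$. Let $h$ denote the local inverse branch of $f$ at $p$ fixing $p$, so that $|Dh(p)| = |Df(p)|^{-1}$. For $\delta > 0$ small, $h^n(B(p,\delta))$ is a connected component of $f^{-n}(B(p,\delta))$ of length $\asymp |Df(p)|^{-n}\delta$, nested around $p$. Since $f^N(x) = p + A(x-c)^d + O((x-c)^{d+1})$ with $A \neq 0$ near $c$, pulling $h^n(B(p,\delta))$ back by $f^N$ through $c$ produces a component $W_{N+n}$ of $f^{-(N+n)}(B(p,\delta))$ containing $c$ whose length satisfies
\[
|W_{N+n}| \asymp \bigl(|Df(p)|^{-n}\delta\bigr)^{1/d}.
\]
Taking logarithms and letting $n\to\infty$, one obtains
\[
\frac{1}{N+n}\ln|W_{N+n}| \longrightarrow -\frac{\ln|Df(p)|}{d}.
\]

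Because $d \geq 2$ and $\chiinff = \ln|Df(p)|$, this limit strictly exceeds $-\chiinff$; since $|W_{N+n}|$ is dominated by the maximum component length at level $N+n$, the limsup in the statement is bounded below by it for every $\delta > 0$, yielding the proposition. The main obstacle is the simultaneous verification of the conditions (i)--(v). The Misiurewicz condition $f^N(c)=p$ together with the placement of the second critical point into the basin of $q$ carves out a countable parameter set, and topological exactness of $f|_{J(f)}$ is generic there once the Julia set is modelled on a full shift. The delicate ingredient is the identity $\chiinff = \ln|Df(p)|$: by the Misiurewicz property the post-critical set is finite, so $f$ is uniformly expanding on $J(f)$ away from a small neighbourhood of $c$, and standard telescoping/distortion estimates imply that every ergodic invariant measure on $J(f)$ is approximated in Lyapunov exponent by periodic measures. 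Consequently $\chiinff$ is realised at some periodic orbit, and one tunes parameters so that $\{p\}$ is the minimizer, which is achievable by making $|Df(p)|$ close to $1$ relative to the other periodic multipliers.
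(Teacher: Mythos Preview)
Your approach is genuinely different from the paper's, but condition~(v) is incompatible with~(iii) and~(iv), so the argument cannot be completed.

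Here is the obstruction. With $f^N(c)=p$, $c$ of order~$d$, and $f|_{J(f)}$ topologically exact, one always has $\chiinff \le \ln|Df(p)|/d$, never $\chiinff=\ln|Df(p)|$. Indeed, since preimages of $c$ are dense in~$J(f)$, fix $z_0\in J(f)$ on the Julia side of~$p$ and $R\ge 1$ with $f^R(z_0)=c$. For each large~$m$ a standard Markov/shadowing argument (available because the postcritical set is finite) produces a periodic point $c_m$ of period $N+m+R$ whose orbit passes once near~$c$, follows the postcritical orbit to near~$p$, lingers $m$ steps near~$p$, then returns via~$z_0$. Writing $\e_m=|c_m-c|$, one has $|f^N(c_m)-p|\asymp\e_m^{\,d}$, and the exit condition after~$m$ steps forces $\e_m^{\,d}\,|Df(p)|^m\asymp 1$. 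Then
\[
\ln|Df^{N+m+R}(c_m)| = (d-1)\ln\e_m + m\ln|Df(p)| + O(1) = -\ln\e_m + O(1),
\]
so the Lyapunov exponent of~$c_m$ tends to $\ln|Df(p)|/d$. Hence $\chiperf\le\ln|Df(p)|/d$, and by~\cite{Riv1204} also $\chiinff\le\ln|Df(p)|/d$. Your components therefore satisfy $\tfrac{1}{N+n}\ln|W_{N+n}|\to -\ln|Df(p)|/d\le-\chiinff$, the wrong inequality. In short: the very itinerary that makes your pull-backs shrink slowly can be closed up into periodic orbits with the \emph{same} exponent, so a single critical passage followed by a sojourn near~$p$ can never beat~$\chiinff$.

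The paper avoids this by making the critical point recurrent rather than preperiodic. It uses a real degree-$4$ polynomial $\fat$ whose unique critical point in $[-1,1]$ is Collet--Eckmann with a prescribed infinite combinatorial type $(M_n)$ satisfying $M_{n+1}\ge(5/2)M_n$. Here the boundary fixed point is $p=-1$ with $|Df(p)|=\lambda$ large, and $\chiinff\approx\tfrac12\ln\lambda$ is \emph{strictly smaller} than $\ln|Df(p)|$. The slow pull-backs are engineered to pass through the critical region \emph{twice}, at two consecutive renormalisation scales, before reaching~$p$; this yields a rate $\approx -\tfrac{3}{8}\ln\lambda>-\tfrac12\ln\lambda\approx-\chiinff$. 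The non-repeating structure of the critical orbit is precisely what prevents these double passages from being periodized into an invariant measure of matching exponent.
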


We also show that a weak version of~\eqref{e:optimal ESC} does hold for every sufficiently small interval~$J$ intersecting~$J(f)$.
\begin{theoalph}
\label{t:qualitative asymptotic expansion}
Let~$f$ be a non-degenerate smooth interval map that is topologically exact on~$J(f)$ and such that~$\chiinff > 0$.
Then there are~$\delta > 0$ and~$\lambda > 1$ such that for every point~$x$ in~$J(f)$, for every integer~$n \ge 1$, and every pull-back~$W$ of~$B(x, \delta)$ by~$f^n$, we have~$|W| \le \lambda^{-n}$.
\end{theoalph}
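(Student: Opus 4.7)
The plan is to split the problem into two regimes: pull-backs of balls centered far from the (finite) set $P$ of periodic points lying on the boundary of some periodic Fatou component, where \cite[Main Theorem']{Riv1204} applies directly, and pull-backs centered near such boundary points, where local contracting inverse branches force exponential shrinking.

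First, note that $P$ is finite: since $f$ is non-degenerate and $\chiinff > 0$, there are only finitely many periodic Fatou components, each with finite (periodic) boundary of repelling periodic points. Let $N$ be a common period for the elements of $P$ and fix $\eta > 0$ so that the balls $B(p, 2\eta)$, $p \in P$, are pairwise disjoint and so that on each $B(p, 2\eta)$ the inverse branch of $f^N$ fixing $p$ is a univalent contraction by a uniform factor $\mu \in (0, 1)$. For every $x \in J(f)$ with $\dist(x, P) \ge \eta/2$, a sufficiently small ball $B(x, \delta')$ is disjoint from a neighborhood of $P$ and thus satisfies the hypothesis of \cite[Main Theorem']{Riv1204}; applying that result and extracting uniform constants by a finite covering of the compact set $\{x \in J(f) : \dist(x, P) \ge \eta/2\}$ yields $\delta_1 > 0$ and $\lambda_1 > 1$ such that every pull-back by any $f^n$ of every such $B(x, \delta_1)$ has length at most $\lambda_1^{-n}$.

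Next, for $x \in J(f)$ with $\dist(x, P) < \eta/2$ and $\delta \le \eta/2$, one has $B(x, \delta) \subseteq B(p, \eta)$ for some $p \in P$; any pull-back of $B(x, \delta)$ is then contained in a pull-back of $B(p, \eta)$, so it suffices to prove exponential shrinking of pull-backs of $B(p, \eta)$. Given such a pull-back $W$ of $B(p, \eta)$ by $f^n$, I would decompose the backward orbit $(f^{n-k}(W))_{k = 0}^n$ into \emph{near-$P$} times, at which the image lies inside some $B(p', \eta)$ and is obtained from its successor by the local inverse branch of $f^N$ at $p'$ (contracting by~$\mu$), and \emph{far-from-$P$} times, at which Koebe-type distortion allows the image, and hence the corresponding backward piece of $W$, to be compared with a pull-back of some $B(y, \delta_1)$ of the first step (inheriting the rate $\lambda_1$). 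Combining contractions from both phases would yield exponential shrinking with uniform constants $\delta > 0$ and $\lambda > 1$.

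The main obstacle is the bookkeeping at transitions between the two regimes: one must show that whenever a backward pull-back leaves a neighborhood of $P$, its size is uniformly comparable to that of a far-from-$P$ pull-back of a ball handled in the first step. This is to be handled by bounded distortion of $f^N$ on each $B(p, 2\eta)$ — valid since $p \in P$ is repelling and thus bounded away from the critical set — together with the finiteness of $P$ and a pigeonhole argument to guarantee that at least a definite fraction of the backward iterations lies in one of the two controlled phases.
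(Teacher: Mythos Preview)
Your two-regime split is the right starting point and mirrors the paper's reduction: the ``far from $P$'' case is indeed handled by \cite[Main Theorem']{Riv1204}, and a compactness argument gives uniform $\delta_1,\lambda_1$. The gap is in the second regime, specifically in the claim that ``combining contractions from both phases would yield exponential shrinking.'' The two bounds you obtain do not combine multiplicatively. The near-$P$ phase (say of length $k$) gives $|W_k|\le C\mu^{k/N}\eta$, but the far-from-$P$ bound coming from Main~Theorem' is \emph{absolute}: it says only that any pull-back of $B(y,\delta_1)$ by $f^{n-k-1}$ has length $\le\lambda_1^{-(n-k-1)}$, regardless of how small $W_{k+1}\subset B(y,\delta_1)$ already is. When $k$ is close to $n$ this bound is useless, and the smallness of $|W_k|$ does not propagate to $|W|$ for free: the $n-k$ remaining pull-back steps may pass through critical points, each of which can raise the length to a $1/\ell_c$ power. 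Your appeal to Koebe is exactly what fails here, since the far-from-$P$ segment is in general not diffeomorphic, and the pigeonhole observation (that a definite fraction of steps lies in one phase) does not bridge this: knowing $|W_k|$ is exponentially small in $n$ says nothing about $|W|$ without distortion control through the critical visits.

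The paper resolves this with two additional ideas you are missing. First, it runs an induction on $n$ and, when $k=m$ is large (at least $\gamma_\dag n$), introduces a \emph{middle time} $\widehat m\approx m/2$: instead of comparing $W$ to a far-from-$P$ ball, it tracks the ratio $\kappa_j$ of $|W_j|$ to the length of a larger pull-back $\widehat W_j$ of $B(p,\delta_\dag)$ by $f^{\widehat m+j}$, where $\widehat m+(n-m)<n$ so the induction hypothesis bounds $|\widehat W_{n-m}|$. Second --- and this is the key estimate --- it shows that each critical point of $f$ is visited \emph{at most once} along the chain $\widehat W_1,\ldots,\widehat W_{n-m}$, so that the ratio $\kappa_j$ degrades by at most a fixed power $1/\ell=1/\prod_c\ell_c$ over the whole far-from-$P$ segment; the argument is that two nearby visits to the same critical point $c$ would force $f^{j'-j}(c)$ so close to $c$ that $c$ would lie in an attracting basin, contradicting $c\in J(f)$. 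Without this bound on critical visits, your scheme cannot close.
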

By~\cite[Main Theorem']{Riv1204} the constant~$\lambda$ in Theorem~\ref{t:qualitative asymptotic expansion} is less than or equal to~$\exp(\chiinff)$.
On the other hand, Proposition~\ref{p:exceptional ESC} shows that in general~$\lambda$ cannot be taken arbitrarily close to~$\exp(\chiinff)$ if~$J$ is a neighborhood of a periodic point in the boundary of a Fatou component.

\subsection{Real versus complex asymptotic expansion}
\label{ss:real versus complex}
For a complex rational map~$f$ of degree at least~$2$, the limit~\eqref{e:optimal ESC}, with~$J$ replaced by any sufficiently small ball centered at a point in~$J(f)$, is shown in~\cite[Main Theorem]{PrzRivSmi03}.

Note that a polynomial of degree at least~$2$ with real coefficients~$f$ defines a map of~$\R$ and a map of~$\C$.
In the following proposition we show that, even if all the critical points of~$f$ in~$\C$ are real, the asymptotic expansion of~$f$ as a map of~$\C$ might be strictly smaller than its asymptotic expansion as a map of~$\R$.
\begin{propalph}
  \label{p:real versus complex}
There is a polynomial~$P$ of degree~$4$ with real coefficients such that all its critical points in~$\C$ are real and such that, if we denote by~$\sM^{\R}$ (resp.~$\sM^{\C}$) the space of all Borel probability measures on~$\R$ (resp.~$\C$) that are invariant by~$P$, then
$$ 0
<
\inf \left\{ \int |D P| d \mu : \mu \in \sM^{\C} \right\}
<
\inf \left\{ \int |D P| d \mu : \mu \in \sM^{\R} \right\}. $$
\end{propalph}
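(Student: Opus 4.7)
The plan is to construct a hyperbolic quartic $P$ with real coefficients, all three critical points real, and all critical orbits escaping to $\infty$, so that $J(P)$ is a Cantor set in $\C$ containing both real and non-real periodic orbits, with the minimum Lyapunov exponent attained on a non-real orbit. Under these hyperbolicity assumptions $P$ has no finite attracting or neutral cycles, so every $\mu\in\sM^\C$ (respectively $\sM^\R$) is supported on $J(P)$ (respectively $J(P)\cap\R$) and has $\int\log|DP|\,d\mu$ bounded below by a positive constant coming from the uniform expansion along $J(P)$. Hence both infima in the statement are positive. By Sigmund's periodic approximation theorem applied to the uniformly expanding set $J(P)$, each infimum equals the corresponding infimum of $\tfrac{1}{n}\log|(P^n)'(q)|$ over periodic points $q$ in the relevant set, so the proposition reduces to arranging a non-real periodic orbit of $P$ whose Lyapunov exponent is strictly smaller than that of every real periodic orbit.

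To produce such a $P$, I would work in the Douady--Hubbard framework of polynomial-like maps. Arrange $P$ so that on a complex topological disk $D\subset\C$ symmetric under complex conjugation, the restriction $P\colon V\to D$ (with $V\Subset D$) is quadratic-like and contains one of the three real critical points. By the straightening theorem this restriction is hybrid-conjugate to some $z\mapsto z^2+c$ with $c\in\R$; choose $c$ strictly inside a hyperbolic component of the Mandelbrot set but close to its boundary, so that the model quadratic has a non-real repelling periodic orbit of arbitrarily small positive Lyapunov exponent. This orbit transports through the hybrid conjugacy to a non-real periodic orbit of $P$ in $J(P)\cap D$ of the same (small) Lyapunov exponent. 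The remaining two real critical points should be arranged to escape to $\infty$ through a disjoint \emph{real Cantor repeller} on $\R\setminus D$ with uniformly large expansion, so that every real periodic orbit of $P$ has Lyapunov exponent bounded below by some $\lambda_1$ strictly exceeding the non-real value from the quadratic-like piece.

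The main obstacle is realizing this schematic dynamical picture as a single polynomial of degree exactly $4$ with real coefficients and three real critical points, with the two invariant pieces coexisting and hyperbolicity holding globally. This can be approached by a quasiconformal surgery of Branner--Hubbard type that glues the quadratic-like model inside $D$ to a prescribed expanding real exterior, and then appeals to the fact that a real-symmetric quasiconformal deformation of a real polynomial is again a real polynomial. An alternative, more direct route is a bifurcation argument inside the two-parameter family $P_{a,b}(z)=z^4+az^2+b$, whose critical points $0,\pm\sqrt{-a/2}$ are all real for $a\le 0$: within the escape-locus hyperbolic component, follow a real-analytic path in $(a,b)$ toward a stratum of the boundary along which a non-real periodic orbit becomes indifferent, without crossing any real bifurcation stratum, and stop just before the boundary. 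Either route reduces the construction to a finite dimensional search for parameters at which the slowest-expanding periodic orbit of $P$ is forced by the combinatorics to be non-real.
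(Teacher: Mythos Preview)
Your proposal contains an internal contradiction that breaks the main construction. You require $P$ to be hyperbolic with \emph{all} critical orbits escaping to $\infty$, but then ask that a quadratic-like restriction of $P$ containing one of the real critical points be hybrid-conjugate to $z^2+c$ with $c$ strictly \emph{inside} a hyperbolic component of the Mandelbrot set. If $c$ lies inside such a component, then $z^2+c$ has a finite attracting cycle and its critical point lies in the basin of that cycle; transported back through the hybrid conjugacy, this means the real critical point of $P$ inside $D$ is attracted to a finite cycle of $P$, not to $\infty$. So $P$ would have a bounded Fatou component and would not be hyperbolic in the sense you use. If instead you take $c$ in the escape locus (outside $M$), then because your quadratic-like map is real-symmetric the straightening forces $c\in\R$, and along the real slice of $\partial M$ the cycles that become indifferent (at $c=1/4$, $c=-3/4$, \ldots) are real cycles; the nearly-indifferent repelling orbit you obtain for nearby $c$ is then a \emph{real} periodic orbit of $P$, which would lower $\chi_{\inf}^{\R}$ as well and destroy the desired strict inequality. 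Your alternative bifurcation route in the family $z^4+az^2+b$ asks for a real path in the escape locus approaching a stratum where a non-real cycle becomes indifferent while avoiding every real bifurcation; you give no argument that such strata are accessible from the real escape locus, nor that real expansion remains uniformly large along the path, so this remains a hope rather than a proof.

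For comparison, the paper proceeds quite differently and does not attempt a hyperbolic example at all. It builds a Collet--Eckmann quartic $f_{a,\tau}$ whose unique critical point in the Julia set is $x=0$, and then exploits the characterisation of $\chi_{\inf}$ through exponential shrinking rates of pull-backs (from \cite{PrzRivSmi03} in the complex case and \cite{Riv1204} in the real case). The Main Lemma exhibits, for the fixed point $-1$ lying on the boundary of a real Fatou component, real pull-backs of $(-1-\delta,-1]$ that shrink strictly slower than $\exp(-n\,\chi_{\per}(f_{a,\tau}))$. These real pull-backs sit inside the corresponding complex pull-backs of $B(-1,\delta)$, so the complex shrinking rate --- which equals $\chi_{\inf}^{\C}$ by \cite{PrzRivSmi03} --- is forced below $\chi_{\per}(f_{a,\tau})=\chi_{\inf}^{\R}$. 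No explicit non-real invariant measure or periodic orbit is ever produced; the gap is detected purely through the geometry of pull-backs near a boundary periodic point.
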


\subsection{Organization}
\label{ss:organization}
The proofs of Propositions~\ref{p:exceptional ESC} and~\ref{p:real versus complex} occupy~\S\S\ref{s:the family}--\ref{s:proof of the Main Lemma}.
In~\S\ref{s:the family} we introduce a~$2$ parameter family of polynomials of degree~$4$ and reduce the proofs of Propositions~\ref{p:exceptional ESC} and~\ref{p:real versus complex} to the existence of a member of this family with properties analogous to those of the map~$f$ in Proposition~\ref{p:exceptional ESC} (Main Lemma).
In~\S\ref{s:mapping properties} we consider some general mapping properties of the maps introduced in~\S\ref{s:the family}, and in~\S\ref{s:combinatorial type} we select parameters for which the corresponding maps have some special combinatorial properties.
In~\S\ref{s:estimates} we choose the map used to prove the Main Lemma and make the main estimates used in the proof.
The proof of the Main Lemma is given in~\S\ref{s:proof of the Main Lemma}.

The proof of~Theorem~\ref{t:qualitative asymptotic expansion} is in~\S\ref{s:proof of ESC revisited} and it is independent of the previous sections.
Some general properties of non-degenerate smooth maps that are used in the proof are gathered in Appendix~\ref{s:general properties}.

\subsection{Acknowledgments}
\label{ss:acknowledgments}
This article was written while the author was visiting Brown University and the Institute for Computational and Experimental Research in Mathematics (ICERM), and during the workshop ``Non-uniformly hyperbolic and neutral one-dimensional dynamics,'' held at the Institute for Mathematical Sciences of the National University of Singapore.
The author thanks all of these institutions for the optimal working conditions provided, and acknowledges partial support from FONDECYT grant 1100922.

\section{A~$2$ parameter family of polynomials}
\label{s:the family}
After introducing a~$2$ parameter family of polynomials, we state the Main Lemma and deduce Propositions~\ref{p:exceptional ESC} and~\ref{p:real versus complex} from it.

For real parameters~$a$ and~$\tau$, consider the real polynomial
$$ \fat(x)
=
1 - \tau + a x^2 - (a + 2 - \tau) x^4. $$
Note that~$\fat(1) = \fat(-1) = -1$ and that for~$a \neq 0$ the point~$x = 0$ is a critical point of~$\fat$.

The maximal invariant set~$\Kat$ of~$\fat$ in~$[-1, 1]$ is important in what follows.
When~$a \ge 10$ and~$\tau \le 2$, all the critical points of~$\fat$ in~$\C$ are real, every point in~$\R \setminus [-1, 1]$ converges to~$- \infty$ under forward iteration by~$\fat$, and~$x = 0$ is the only critical point of~$\fat$ in~$\Kat$, see~\S\ref{s:mapping properties}.
So in this case~$\Kat$ is contained in~$[-1, 1]$.

The purpose of this section is to deduce Propositions~\ref{p:exceptional ESC} and~\ref{p:real versus complex} from the following lemma.
For~$a \ge 10$ and~$\tau \le 2$, put
\begin{multline*}
\chiper(\fat)
\\ \=
\inf \left\{ \frac{1}{n} \ln |D\fat^n(p)| : n \ge 1, p \text{ periodic point of~$\fat$ of period } n \right\}.  
\end{multline*}
\begin{generic}[Main Lemma]
There are parameters~$a \ge 10$ and~$\tau \le 2$ such that~$\fat$ is topologically exact on~$\Kat$, and such that the following properties hold: $\chiper(\fat) > 0$, the critical point~$x = 0$ is in~$\Kat$, $\fat$ satisfies the Collet-Eckmann condition:
\begin{equation}
\label{e:CE condition}
\liminf_{n \to + \infty} \frac{1}{n} \ln |D \fat^n(\fat(0))| > 0,
\end{equation}
and for every~$\delta > 0$ we have:
\begin{multline*}
\limsup_{m \to + \infty} \frac{1}{m} \ln \left\{ |W| : W \text{ connected component of } \fat^{- m}\left( (-1 - \delta, -1] \right) \right\}
\\ >
- \chiper(\fat).
\end{multline*}
\end{generic}
The proof of the Main Lemma occupies~\S\S\ref{s:mapping properties}--\ref{s:proof of the Main Lemma}.

For the parameters~$a$ and~$\tau$ given by the Main Lemma there is no compact interval that is invariant by~$\fat$.
So we need to modify~$\fat$ to deduce Proposition~\ref{p:exceptional ESC} from the Main Lemma.

\begin{proof}[Proof of Propositions~\ref{p:exceptional ESC} and~\ref{p:real versus complex} assuming the Main Lemma]
Let~$a$ and~$\tau$ be given by the Main Lemma.
Note that we have~$|D\fat| \ge 2a > 1$ on~$\R \setminus [-1, 1]$, so for each~$x < - 1$ we have
$$ \fat(-x) = \fat(x) < x. $$
Let~$A > 3$ be sufficiently large so that~$\fat([-2, 2])$ is contained in~$(- A, A)$, put~$I \= [-A, A]$, and let~$f_0 : I \to I$ be defined by~$f_0(x) \= |x| /2  - 3 A / 2$.
Moreover, let~$f : I \to I$ be a smooth function that coincides with~$\fat$ on~$[- 2, 2]$, that coincides with~$f_0$ on~$[- A, - 3] \cup [3, A]$, and such that for each~$x$ in~$(- A, -1)$ we have
$$ Df(x) > 0
\text{ and }
f(-x) = f(x) < x. $$
Note in particular that~$f$ is a non-degenerate smooth map, that~$x = - A$ is a hyperbolic attracting fixed point of~$f$, and that every point of~$I \setminus [-1, 1]$ converges to~$x = - A$ under forward iteration.
Since~$f$ coincides with~$\fat$ on~$[-1, 1]$ and since~$\fat$ is topologically exact on~$\Kat$, it follows that the Julia set of~$f$ is equal to~$\Kat$.
So by~\cite[Main Theorem']{Riv1204} we have~$\chiinf(f) = \chiper(\fat)$ and by the Main Lemma~$f$ satisfies the conclusions of Proposition~\ref{p:exceptional ESC} with~$p = - 1$.

To prove Proposition~\ref{p:real versus complex} with~$P = \fat$, we show that~$\fat$ satisfies the Collet-Eckmann condition as a complex polynomial and that every invariant probability measure of~$\fat$ on~$\C$ is supported on the Julia set of~$\fat$.
This implies that~$\chiinf^{\C}(\fat) > 0$ and that for every sufficiently small~$\delta > 0$ the~$\liminf$ in the Main Lemma is less than or equal to~$- \chiinf^{\C}(\fat)$, see~\cite[Main Theorem]{PrzRivSmi03}.
So by the Main Lemma we have~$\chiinf^{\C}(\fat) < \chiper(\fat)$ and then the inequality~$\chiinf^{\C}(\fat) < \chiinf^{\R}(\fat)$ follows from the fact that~$\chiper(\fat)$ is equal to~$\chiinff = \chiinf^{\R}(\fat)$, see~\cite[Main Theorem']{Riv1204}.
To prove that~$\fat$ satisfies the Collet-Eckmann condition as a complex polynomial, note that the fact that all the critical points of~$\fat$ in~$\C$ are real and that~$x = 0$ is the only critical point of~$\fat$ in~$\Kat$, implies that each critical point of the complex polynomial~$\fat$ different from~$x = 0$ escapes to~$\infty$ in the Riemann sphere.
So all the critical points of~$\fat$ different from~$x = 0$ are in the Fatou set of~$\fat$.
Together with~\eqref{e:CE condition}, this implies that~$\fat$ satisfies the Collet-Eckmann condition as a complex polynomial.
On the other hand, it follows that the Fatou set of~$\fat$ coincides with the attracting basin of~$\infty$ of~$\fat$, see~\cite[Theorem~$1$]{GraSmi98}.
So every invariant probability measure of~$\fat$ on~$\C$ is supported on the Julia set of~$\fat$.
This completes the proof of Proposition~\ref{p:real versus complex}.
\end{proof}

\section{Mapping properties}
\label{s:mapping properties}
The purpose of this section is to prove some mapping properties of the maps introduced in the previous section.

Note that for~$a > 0$ and~$\tau \le 2$ we have~$\fat(\R \setminus [-1, 1]) = (-\infty, -1)$.
We also have~$|D\fat| \ge 2a$ on~$(- \infty, -1]$; so, if in addition~$a > 1/2$, then every point in~$\R \setminus [-1, 1]$ converges to~$- \infty$ under forward iteration.

Note on the other hand that for every~$a$ and~$\tau$ the point~$x = 0$ is a critical point of~$\fat$ and for~$\tau$ in~$[0, 2]$ the critical value~$\fat(0) = 1 - \tau$ of~$\fat$ is in~$[-1, 1]$.
If in addition~$a > 0$, then~$\fat$ has~$2$ distinct critical points different from $x = 0$: a critical point~$c_-$ in~$(-1, 0)$ and a critical point~$c_+$ in~$(0, 1)$.
Since~$\fat$ is a polynomial of degree~$4$, it follows that all of the critical points of~$\fat$ are real and hence that~$\fat$ has negative Schwarzian derivative on~$\R$.
By symmetry,
$$ c_- = - c_+
\text{ and }
\vat \= \fat(c_+) = \fat(c_-). $$
On the other hand, for~$a > 0$ and~$\tau$ in~$[0, 2]$ we have
\begin{equation}
\label{e:critical value}
\vat
=
1 - \tau + \frac{a^2}{4 (a + 2 - \tau)}
\ge
\frac{a^2}{4 (a + 2)} - 1.
\end{equation}
So, for~$a \ge 10$ we have~$\vat > 1$ and therefore neither~$c_-$ or~$c_+$ is in~$\Kat$.
Since~$\Kat$ contains~$x = 1$ and~$x = - 1$, it follows that~$\Kat$ is disconnected.
Note that~$\vat > 1$ also implies that $\fat^{-1}([-1, 1])$ consists of~$3$ connected components; denote by~$I_{0, a, \tau}$ (resp.~$\Vat$, $I_{1, a, \tau}$) the connected component of this set containing~$x = -1$ (resp.~$0$, $1$).
Note in particular that~$\Kat$ is contained in~$I_{0, a, \tau} \cup \Vat \cup I_{1, a, \tau}$.
\begin{lemm}
\label{l:macro}
For every~$\eta > 1$ there is~$a_0 \ge 20$ such that for every~$a \ge a_0$ and every~$\tau$ in~$[0, 2]$ the following properties hold.
\begin{enumerate}[1.]
\item 
Suppose the forward orbit of~$x = 0$ under~$\fat$ is contained in~$[-1, 1]$.
Then for every integer~$m \ge 1$ and every interval~$J$ such that~$\fat^m$ maps~$J$ diffeomorphically to~$[-1, 1]$, the map~$\fat^m$ maps a neighborhood~$\hJ$ of~$J$ diffeomorphically to~$[-2, 2]$ with distortion bounded by~$\eta$.
\item 
The interval~$I_{0,a,\tau}$ (resp.~$\Vat$, $I_{1, a, \tau}$) is contained in
$$ [-1, -1 + 2/a]
\left( \text{resp. } \left[-2 \sqrt{\tau/a}, 2 \sqrt{\tau/a} \right],
[1 - 2/a, 1] \right). $$
\item
If we put
$$ \lambdaat \= D\fat(-1) = 2(a + 4 - 2 \tau), $$
then for every~$x$ in~$\Vat$ such that~$\fat(x)$ is in~$I_{1, a, \tau}$, we have
$$ \eta^{-1}
\le
\frac{|x|}{(\sqrt{2} \lambdaat^{- 1}) |\fat^2(0) - \fat^2(x)|^{1/2}}
\le
\eta $$
and
$$ \eta^{-1}
\le
\frac{ |D\fat^2(x)|}{ (\sqrt{2} \lambdaat) |\fat^2(0) - \fat^2(x)|^{1/2}}
\le
\eta. $$
\end{enumerate}
\end{lemm}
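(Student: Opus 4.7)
The three items are essentially independent and I would treat them in turn, exploiting throughout that, for $a$ large, $\fat$ has highly expanding branches on $I_{0, a, \tau} \cup \Vat \cup I_{1, a, \tau}$ and a critical value $\vat \to \infty$ by~\eqref{e:critical value}.

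For Part~1, my plan is in two steps: first argue that a suitable $\hJ$ exists on which $\fat^m$ is a diffeomorphism onto $[-2, 2]$, and then bound its distortion via the classical estimate
\[
\log \frac{D\fat^m(x)}{D\fat^m(y)}
\le
\sum_{k = 0}^{m - 1} \|\fat'' / \fat'\|_{\hJ_k} \cdot |\hJ_k|.
\]
For existence, observe that $J_{m - 1}$ must lie in $I_{0, a, \tau} \cup I_{1, a, \tau}$ since $\fat$ on $\Vat$ is a fold rather than a diffeomorphism onto $[-1, 1]$, and that each $\hJ_k$ can be chosen inside a single monotone branch of $\fat$ extending $J_k$; the critical points $c_\pm$ are excluded when $\vat > 2$, while $0$ is excluded at every step by a careful choice of branches, using the hypothesis that the forward orbit of $0$ stays in $[-1, 1]$. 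For the distortion bound, $|\fat'|$ is at least of order $\lambdaat \sim 2 a$ on $I_{0, a, \tau}^* \cup I_{1, a, \tau}^*$ and of order $\sqrt{a \tau}$ on a neighborhood of $\pm \alpha$ inside $\Vat$, so $|\hJ_k|$ decays geometrically along the pullback, while $\|\fat'' / \fat'\|$ is $O(1)$ on $I_{0, a, \tau}^* \cup I_{1, a, \tau}^*$ and of order $|x|^{-1}$ inside $\Vat$. A step-by-step estimate then shows that the total sum is $O(1/a)$, hence at most $\eta - 1$ for $a$ large enough depending on $\eta$.

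Part~2 is a direct computation. For $\Vat$, the positive endpoint $\alpha$ satisfies $\fat(\alpha) = 1$, that is, $(a + 2 - \tau) \alpha^4 - a \alpha^2 + \tau = 0$; the smaller root gives $\alpha^2 = \tau/a + O(\tau^2 / a^2)$, so $\alpha \le 2 \sqrt{\tau / a}$ for $a$ large. For $I_{0, a, \tau}$, a Taylor expansion at $-1$ gives $\fat(-1 + h) = -1 + \lambdaat h + O(a h^2)$, whence $\fat(-1 + 2/a) = 3 + O(1/a) > 1$ for $a$ large; since $\fat$ takes values in $[-1, 1]$ on $I_{0, a, \tau}$, this forces $I_{0, a, \tau} \subseteq [-1, -1 + 2/a]$. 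The bound on $I_{1, a, \tau}$ follows from the identity $I_{1, a, \tau} = - I_{0, a, \tau}$, a consequence of $\fat(-x) = \fat(x)$.

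For Part~3, I would proceed by Taylor expansion near the outer endpoints $\pm \alpha$ of $\Vat$. The hypothesis $\fat(x) \in I_{1, a, \tau}$ together with Part~2 forces $x^2$ to within $O(1/a^2)$ of $\alpha^2$. The algebraic identity $\fat(x) - 1 = (x^2 - \alpha^2) \bigl( a - (a + 2 - \tau)(x^2 + \alpha^2) \bigr)$, combined with $x^2 + \alpha^2 = O(\tau/a)$, simplifies to $\fat(x) - 1 = a (x^2 - \alpha^2)(1 + O(1/a))$. Composing with a Taylor expansion of $\fat$ at $1$, where $\fat'(1) = -\lambdaat$ and $\fat(1) = -1 = \fat^2(\pm \alpha)$, yields the leading asymptotics of $\fat^2(x) - \fat^2(\pm \alpha)$ in terms of $x^2 - \alpha^2$. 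Rewriting with $\fat^2(0)$ in place of $\fat^2(\pm \alpha)$ and solving for $|x|$ and $|D \fat^2(x)|$ produces both claimed bounds, the $\sqrt{2}$ prefactor originating in the quadratic vanishing of $\fat'$ at~$0$. The main obstacle will be to verify that all error terms are $o(1)$ uniformly in $\tau \in [0, 2]$ and uniformly for $x$ in the whole set $\Vat \cap \fat^{-1}(I_{1, a, \tau})$, not merely at the endpoint $\pm \alpha$.
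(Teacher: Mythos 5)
Your Part~2 is fine and is essentially the paper's computation. The serious problems are in Parts~1 and~3. For Part~1, the distortion scheme via $\sum_{k}\|\fat''/\fat'\|_{\hJ_k}\,|\hJ_k|$ cannot yield a bound tending to~$1$: the intermediate intervals $\hJ_k=\fat^k(\hJ)$ are only constrained to carry a diffeomorphic branch, so while $x=0$ cannot lie in $\hJ_k$, nothing in your argument bounds $\dist(\hJ_k,0)$ from below in terms of $|\hJ_k|$; near such a passage $\|\fat''/\fat'\|_{\hJ_k}\sim \dist(\hJ_k,0)^{-1}$, so a single term is of size roughly $|\hJ_k|/\dist(\hJ_k,0)$, which is not $O(1/a)$ and can be large. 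Producing the missing lower bound on $\dist(\hJ_k,0)/|\hJ_k|$ is tantamount to the Koebe-space estimate you are trying to bypass. Moreover the claim that $|\hJ_k|$ decays geometrically along the pullback is false across passages through $\Vat$: pulling back through the fold, where $|D\fat|$ is tiny, the interval grows again, and branches can return to $\Vat$ a definite fraction of the time (e.g.\ shadowing a period-two orbit alternating between $\Vat$ and $I_{1,a,\tau}$), so you cannot even bound the number of bad terms. The paper avoids all of this: since all critical points of the degree-$4$ polynomial $\fat$ are real, $\fat$ has negative Schwarzian derivative; the diffeomorphic branch onto $[-1,1]$ extends to a branch onto $(\fat(\vat),\vat)$ because the orbit of the critical value $1-\tau$ stays in $[-1,1]$ and the orbit of $\vat$ never re-enters $(\fat(\vat),\vat)$; and the Koebe principle then bounds the distortion on the preimage of $[-2,2]$ by a constant tending to $1$ as $\vat\to\infty$, i.e.\ as $a\to\infty$, with no summation along the orbit. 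Your extension step is close to the right idea (the point is that no critical value's forward orbit meets $[-2,2]\setminus[-1,1]$, not a ``choice of branches''), but without Schwarzian/Koebe input the distortion estimate has no support.

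For Part~3, the step ``rewriting with $\fat^2(0)$ in place of $\fat^2(\pm\alpha)$'' is exactly the crux and is left unjustified: $\fat^2(\pm\alpha)=\fat(1)=-1$ while $\fat^2(0)=\fat(1-\tau)$, and these are far apart unless $1-\tau$ is near $1$; at $\tau=2$ and $x=\pm\alpha$ both equal $-1$ and the asserted bounds degenerate, so the uniformity issue you flag as the ``main obstacle'' is not a technicality but fatal for your anchoring at $\pm\alpha$ over the full range $\tau\in[0,2]$. The paper's computation never expands at $\pm\alpha$: it evaluates exactly, for all $x\in\Vat$, the ratios $|\fat(0)-\fat(x)|/((\lambdaat/2)x^2)$ and $D\fat(x)/(\lambdaat x)$, showing they are within a factor close to $1$ for $a$ large, and then converts $|\fat(0)-\fat(x)|$ into $|\fat^2(0)-\fat^2(x)|$ using only that $|D\fat|$ is comparable to $\lambdaat$ on $I_{1,a,\tau}$ --- a step which uses that the segment joining $\fat(x)$ to $\fat(0)$ lies in $I_{1,a,\tau}$, i.e.\ the configuration in which the lemma is actually applied (where $\fat(\Vat)\subset I_{1,a,\tau}$). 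To complete your plan you would have to either work in that configuration explicitly or anchor the expansion at $\fat(0)$ as the paper does; as written, the transfer from $\pm\alpha$ to $0$ does not go through.
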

In the proof of this lemma we use the Koebe principle for maps with negative Schwarzian derivative, see for example~\cite[\S{IV}, $1$]{dMevSt93}.
\begin{proof}
Recall that for~$a \ge 10$ and~$\tau$ in~$[0, 2]$ we have~$\vat > 1$.
This implies that~$\fat(\vat) < - \vat$ and that the forward orbit of~$\vat$ is disjoint from~$(\fat(\vat), \vat)$.
So, if the orbit~$x = 0$ is contained in~$[-1, 1]$ and if~$J$ is an interval and~$m \ge 1$ an integer such that~$\fat^m$ maps~$J$ diffeomorphically to~$[-1, 1]$, then~$\fat^m$ maps a neighborhood of~$J$ diffeomorphically to~$(\fat(\vat), \vat)$.
In view of the Koebe principle, to prove part~$1$ it is enough to show that for every~$A > 1$ there is~$a_0 \ge 10$ such that for every~$a \ge a_0$ and every~$\tau$ in~$[0, 2]$ we have~$\vat \ge A$ and~$\fat(\vat) \le - A$.
To prove the first inequality, note that for~$a \ge 8$ and~$\tau$ in~$[0, 2]$ we have by~\eqref{e:critical value} that~$\vat \ge a/5 - 1$.
So for~$a \ge \max \{ 10, 5(A + 1) \}$ we have
\begin{equation*}
- f(\vat) > \vat \ge A.  
\end{equation*}
This completes the proof of part~$1$.

To prove part~$2$, note that for~$a \ge 10$ and~$\tau$ in~$[0, 2]$ there are~$4$ solutions to the equation~$\fat(x) = 1$, counted with multiplicity.
One the other hand, we have
$$ \fat \left( 2 \sqrt{\tau / a} \right)
=
1 + 3\tau - 16 \frac{(a + 2 - \tau) \tau^2}{a^2}
\ge
1 + \tau \left( 3 - 32 \frac{a + 2}{a^2} \right). $$
Thus for~$a \ge 20$ we have
$$ \fat \left(- 2 \sqrt{\tau / a} \right)
=
\fat \left( 2 \sqrt{\tau / a} \right)
>
1. $$
Since~$\fat(0) \le 1$, it follows that~$\Vat$ is contained in~$\left[- 2 \sqrt{\tau / a}, 2 \sqrt{\tau / a} \right]$.
On the other hand, for~$a \ge 10$ and~$\tau$ in~$[0, 2]$ we have
$$ \fat \left( - 1 + \frac{2}{a} \right)
=
\fat \left( 1 - \frac{2}{a} \right)
\ge
f_{a,2} \left( 1 - \frac{2}{a} \right)
=
4 \frac{(a - 2)^2 (a - 1)}{a^3} - 1
>
1. $$
This proves that~$I_{0, a, \tau}$ is contained in~$[-1, -1 + 2/a]$ and that~$I_{1, a, \tau}$ is contained in~$[1, 1 + 2/a]$.

To prove part~$3$, let~$a_0 \ge 20$ be sufficiently large so part~$2$ holds.
Then for every~$a \ge a_0$ and every~$x$ in~$\Vat$ we have
$$ \frac{|\fat(0) - \fat(x)|}{(\lambdaat/2) x^2}
=
\frac{a}{a + 4 - 2 \tau} \left( 1 - \frac{a + 2 - \tau}{a} x^2 \right)
\in
\left[ \frac{a}{a + 4} \left(1 - 8 \frac{a + 2}{a^2} \right), 1 \right] $$
and
$$ \frac{D\fat(x)}{\lambdaat x}
=
\frac{a}{a + 4 - 2 \tau} \left( 1 - 2 \frac{a + 2 - \tau}{a} x^2 \right)
\in
\left[ \frac{a}{a + 4} \left(1 - 16 \frac{a + 2}{a^2} \right), 1 \right]. $$
This proves that there is a constant~$a_1 \ge a_0$ such that if~$a \ge a_1$ and~$\tau$ is in~$[0, 2]$, then for every~$x$ in~$\Vat$ we have
$$ \eta^{-1/2}
\le
\frac{|x|}{\sqrt{2 / \lambdaat^{-1}} |\fat(0) - \fat(x)|^{1/2}}
\le
\eta^{1/2} $$
and
$$ \eta^{-1/2}
\le
\frac{| D\fat(x)|}{\sqrt{2 \lambdaat} |\fat(0) - \fat(x)|^{1/2}}
\le
\eta^{1/2}. $$
So, to prove part~$3$ it is enough to show that there is~$a_2 \ge a_1$ such that for every~$a \ge a_2$ and every~$\tau$ in~$[0, 2]$ we have for every~$x'$ in~$I_{1, a, \tau}$,
\begin{equation}
\label{e:bounded distortion right branch}
\eta \lambdaat
\le
|D\fat(x')|
\le
\eta \lambdaat.
\end{equation}
To prove this, just note that for~$a \ge 20$, $\tau$ in~$[0, 2]$, and~$x$ in~$I_{1, a, \tau}$, the number
$$ \frac{D\fat(x)}{ - \lambdaat}
=
\frac{a}{a + 4 - 2 \tau} x \left( 2 \frac{(a + 2 - \tau)}{a} x^2 - 1 \right) $$
is in the interval
$$ \left[ \frac{a}{a + 4} \left(1 - \frac{2}{a} \right) \left( 2 \left(1 - \frac{2}{a} \right)^2 - 1 \right), 1 + \frac{4}{a} \right]. $$
This implies~\eqref{e:bounded distortion right branch} and completes the proof of the lemma.
\end{proof}

\section{Combinatorial type}
\label{s:combinatorial type}
In this section we show there parameters~$a$ and~$\tau$ for which the corresponding map~$\fat$ has some special combinatorial properties (Lemma~\ref{l:realization}).
We also prove some general facts about the dynamics of these maps (Lemma~\ref{l:general properties of combinatorial type}).

Let~$\underline{M} \= \left( M_n \right)_{n = 0}^{+ \infty}$ be a sequence of integers such that~$M_0 = 2$ and such that for every~$n \ge 0$ we have~$M_{n + 1} \ge 2 M_n + 1$.
Given~$a \ge 20$ and~$\tau$ in~$[0, 2]$, we say that \emph{the combinatorics of~$\fat$ is of type~$\underline{M}$} if there is an increasing sequence of points~$(x_n)_{n = 0}^{+ \infty}$ in~$(-1, 0)$, such that the following properties hold for every~$n \ge 0$.

\begin{enumerate}[A.]
\item 
The interval~$V_n \= [x_n, - x_n]$ satisfies~$\fat(V_n) \subset I_{1, a, \tau}$ and there is an interval~$J_n$ containing~$\fat^2(V_n)$, such that~$\fat^{M_n - 2}$ maps~$J_n$ diffeomorphically to~$[-1, 1]$, preserving the orientation.
Moreover
$$ \fat^{M_n}(x_n) = -1,
\fat^{M_n}(x_{n + 1}) = x_n,
\text{ and }
\fat^{M_n}(0) \in  V_n \cap (-1, 0). $$
\item 
For every~$j$ in~$\{0, \ldots, M_{n + 1} - 2 M_n - 1 \}$ the point~$\fat^{2M_n + j}(0)$ is in~$I_{0, a, \tau}$.
\end{enumerate}

Note that if the combinatorics of~$\fat$ is of type~$\underline{M}$, then~$V_0 = \Vat$ and for every~$n \ge 0$ the interval~$V_{n  + 1}$ is the pull-back of~$V_n$ by~$\fat^{M_n}$ containing~$x = 0$.
\begin{lemm}
\label{l:realization}
Let~$\underline{M} \= \left( M_n \right)_{n = 0}^{+ \infty}$ be a sequence of integers such that~$M_0 = 2$ and such that for every~$n \ge 0$ we have~$M_{n + 1} \ge 2 M_n + 1$.
Then for every~$a \ge 20$ there is~$\tau$ in~$[0, 2]$ such that the combinatorics of~$\fat$ is of type~$\underline{M}$.
\end{lemm}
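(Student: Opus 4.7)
The plan is to produce $\tau$ as a point of $\bigcap_{n \ge 0} T_n$, where $T_0 \supset T_1 \supset \cdots$ is a nested decreasing sequence of nonempty closed sub-intervals of $[0, 2]$ built by induction on $n$. For every $\tau \in T_n$ I would arrange that conditions~(A) at levels $0, \ldots, n$ and~(B) at levels $0, \ldots, n - 1$ hold for $\fat$ with the prescribed data $(M_0, \ldots, M_n)$. Throughout the induction I would maintain the hypothesis $(H_n)$: the continuous function $\Phi_n(\tau) := \fat^{M_n}(0)$, as $\tau$ varies over $T_n$, attains every point of the closure of $V_n(\tau_*) \cap (-1, 0)$ for some reference $\tau_* \in T_n$. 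By nested compact intervals, $\bigcap_n T_n$ is nonempty, and any point of it is the desired~$\tau$.

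For the base case $n = 0$, set $V_0 = \Vat$. Then $\Phi_0(\tau) = \fat(1 - \tau)$, with $\Phi_0(0) = f_{a, 0}(1) = -1 = x_0(0)$ and $\partial_\tau \Phi_0(0) = 2a + 8 > 0$ by a short explicit calculation. So $\Phi_0$ moves continuously and monotonically into $V_0 \cap (-1, 0)$ as $\tau$ grows from $0$, and $T_0$ is the closed sub-interval on which $\Phi_0(\tau) \in \overline{V_0(\tau) \cap (-1, 0)}$. The remaining items of~(A) at level~$0$ are immediate ($M_0 - 2 = 0$, so $\fat^{M_0 - 2}$ is the identity on $[-1, 1]$, and $\fat^2(x_0) = \fat(1) = -1$) or follow from Lemma~\ref{l:macro}.2.

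For the inductive step, assume $T_n$ and $(H_n)$. By Lemma~\ref{l:macro}.1, $\fat^{M_n}$ folds $V_n$ 2-to-1 onto $[-1, \Phi_n(\tau)]$ with distortion arbitrarily close to~$1$, so the central pullback $V_{n+1} \subset V_n$ of $V_n$ by $\fat^{M_n}$ is well defined. Using $(H_n)$ I restrict to the closed sub-interval $T_n^{(1)} \subset T_n$ where $\Phi_n(\tau) \in \overline{V_{n+1}(\tau) \cap (-1, 0)}$; on it, $\fat^{2 M_n}(0) = \fat^{M_n}(\Phi_n(\tau))$ lies in $[-1, x_n] \subset I_{0, a, \tau}$. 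Since $x = -1$ is an expanding fixed point of multiplier $\lambdaat > 1$ (Lemma~\ref{l:macro}.3), iterates in $I_{0, a, \tau}$ escape at the geometric rate $\lambdaat$, and the escape time from $I_{0, a, \tau}$ is a continuous, integer-valued function of $\tau$ on $T_n^{(1)}$, taking arbitrarily large values as $\Phi_n(\tau) \to x_n$ and $\fat^{2 M_n}(0) \to -1$. I then select a closed sub-interval $T_n^{(2)} \subset T_n^{(1)}$ on which the escape time equals exactly $M_{n+1} - 2 M_n$, realizing~(B) at level~$n$. One further iterate of~$\fat$ pushes the orbit out of $I_{0, a, \tau}$, yielding $\Phi_{n+1}(\tau) = \fat^{M_{n+1}}(0) \in [-1, 1]$. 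Combining the bounded-distortion estimates of Lemma~\ref{l:macro}.1 along $\fat^{M_{n+1}}$ near $0$ with the expanding multiplier $\lambdaat$ at $-1$ (Lemma~\ref{l:macro}.3) and the negative Schwarzian derivative of $\fat$, I would argue that $\Phi_{n+1}(T_n^{(2)})$ is a continuous arc whose length exceeds $|V_{n+1}(\tau)|$. Taking $T_{n+1} \subset T_n^{(2)}$ to be the closed sub-interval on which $\Phi_{n+1}(\tau) \in \overline{V_{n+1}(\tau) \cap (-1, 0)}$ closes the induction and preserves~$(H_{n+1})$.

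The main obstacle will be balancing two competing shrinking rates: the parameter window $T_n$ shrinks rapidly with~$n$, while the target interval $V_{n+1} \cap (-1, 0)$ also shrinks, so the amplification $|\partial_\tau \Phi_{n+1}|$ on $T_n^{(2)}$ must grow fast enough that $\Phi_{n+1}(T_{n+1})$ still surjects onto its target. The precise bookkeeping rests on the distortion bound of Lemma~\ref{l:macro}.1, which transports Koebe-controlled expansion through $\fat^{M_n - 2}|_{J_n}$ at each level, together with the expanding multiplier $\lambdaat$ at $x = -1$ from Lemma~\ref{l:macro}.3, and the monotonicity of the kneading invariant of $\fat$ in the parameter $\tau$ (a standard property of full one-parameter families of unimodal-type maps with negative Schwarzian derivative), which ensures that each of the sub-intervals $T_n^{(1)}, T_n^{(2)}, T_{n+1}$ extracted at each stage is genuinely connected.
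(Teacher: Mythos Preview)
Your overall strategy---a nested sequence of closed parameter intervals whose intersection gives the desired~$\tau$---is exactly what the paper does. But the execution has gaps, and the machinery you invoke (parameter-derivative amplification, kneading monotonicity) is both unnecessary and unjustified here.

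First, a concrete error. You claim that on~$T_n^{(1)}$, where $\Phi_n(\tau)\in\overline{V_{n+1}(\tau)\cap(-1,0)}=[x_{n+1},0]$, one has $\fat^{2M_n}(0)\in[-1,x_n]\subset I_{0,a,\tau}$. Both inclusions are false. Since $\fat^{M_n}$ maps $[x_n,0]$ monotonically onto $[-1,\Phi_n(\tau)]$ and sends $x_{n+1}\mapsto x_n$, it sends $[x_{n+1},0]$ onto $[x_n,\Phi_n(\tau)]\subset V_n$, so $\fat^{2M_n}(0)\in V_n$, not in $[-1,x_n]$. And $[-1,x_n]$ is never contained in $I_{0,a,\tau}$: by Lemma~\ref{l:macro} part~2, $x_n\in\Vat\subset[-2\sqrt{\tau/a},2\sqrt{\tau/a}]$ while $I_{0,a,\tau}\subset[-1,-1+2/a]$. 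So your mechanism for placing the orbit in~$I_0$ does not work as stated.

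Second, the appeals to kneading monotonicity are problematic: $\fat$ is a degree~$4$ polynomial with three real critical points, not a unimodal map, and monotonicity of kneading data in one-parameter multimodal families is not a standard fact one may quote. Likewise, the ``main obstacle'' you identify---showing that $|\partial_\tau\Phi_{n+1}|$ grows fast enough to cover the shrinking target~$V_{n+1}$---would require genuine work you have not done.

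The paper sidesteps all of this with a stronger inductive hypothesis: on each~$\cT_n$, the function $\tau\mapsto\fat^{M_n}(0)$ equals~$-1$ at the left endpoint and~$+1$ at the right endpoint (i.e.\ it sweeps the full target $[-1,1]$, not just $V_n\cap(-1,0)$). This is arranged as follows. Inside~$\cT_n$ one first uses the intermediate value theorem to find $\cT_{n+1}'=[\tau_{n+1}^-,\tau_{n+1}^+]$ on which $\fat^{M_n}(0)$ ranges over $[x_{n,\tau},0]$. At $\tau_{n+1}^-$ one has $\fat^{M_n}(0)=x_{n,\tau}$, hence $\fat^{2M_n}(0)=-1$ and therefore $\fat^{M_{n+1}}(0)=-1$. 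At $\tau_{n+1}^+$ one has $\fat^{M_n}(0)=0$, so $\fat^{2M_n}(0)=\fat^{M_n}(0)=0$, and iterating further under $\fat^{M_{n+1}-2M_n}$ the orbit escapes $I_0$ and reaches past~$+1$. A second application of the intermediate value theorem then gives the least $\tau_{n+1}\in\cT_{n+1}'$ with $\fat^{M_{n+1}}(0)=1$, and $\cT_{n+1}:=[\tau_{n+1}^-,\tau_{n+1}]$ inherits the full-range property at level~$n+1$. Property~B comes for free from the minimality of~$\tau_{n+1}$. No distortion bounds, no parameter derivatives, no kneading theory---just continuity and the intermediate value theorem.
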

\begin{proof}
We define recursively a nested sequence of closed intervals~$(\cT_n)_{n = 0}^{+ \infty}$ and for every~$n \ge 0$ and~$\tau$ in~$\cT_n$ a point~$x_{n, \tau}$, such that the following properties hold for each~$n \ge 0$:
\begin{itemize}
\item
The point~$x_{n, \tau}$ depends continuously with~$\tau$ and~$\fat^{M_n}(x_{n, \tau}) = -1$.
\item
The point~$\fat^{M_n}(0)$ is equal to~$- 1$ if~$\tau$ is the left end point of~$\cT_n$ and it is equal to~$1$ if~$\tau$ is the right end point of~$\cT_n$.
\item
The set~$\fat^{M_n}([x_{n, \tau}, - x_{n, \tau}])$ is contained in~$[-1, 1]$ and the pull-back of~$[-1, 1]$ by~$\fat^{M_n - 2}$ containing~$\fat^2(x_{n, \tau})$ is diffeomorphic and~$\fat^{M_n - 2}$ preserves the orientation on this set.
\end{itemize}

For each~$\tau$ in~$[0, 2]$ let~$x_{0, \tau}$ be the left end point of~$\Vat$.
Clearly~$x_{0, \tau}$ depends continuously with~$\tau$ on~$[0, 2]$ and~$\fat^2(x_{0, \tau}) = -1$.
To define~$\cT_0$, note that by the intermediate value theorem there is~$\tau$ in~$[0, 2]$ such that~$\fat(0)$ is equal to the left end point of~$I_{1, a, \tau}$.
Let~$\tau_0$ be the least number with this property and put~$\cT_{0} \= [0, \tau_0]$.
Then for every~$\tau$ in~$\cT_0$ the set~$\fat(\Vat)$ is contained in~$I_{1, a, \tau}$ and therefore~$\fat^2(\Vat)$ is contained in~$[-1, 1]$.
Moreover, note that~$f^2_{a, 0}(0) =  - 1$ and~$f_{a, \tau_0}^2(0) = 1$, so all of the properties above are satisfied when~$n = 0$.

Let~$n \ge 0$ be an integer and suppose by induction that a closed interval~$\cT_n$ and for every~$\tau$ in~$\cT_n$ a point~$x_{n, \tau}$ are already defined.
By the intermediate value theorem we can find parameters~$\tau_{n + 1}^{-}$ and~$\tau_{n + 1}^{+}$ in~$\cT_n$ such that,
$$ \tau_{n + 1}^{-} < \tau_{n + 1}^{+},
f_{a, \tau_{n + 1}^{-}}^{M_n}(0) = x_{n, \tau},
f_{a, \tau_{n + 1}^{+}}^{M_n}(0) = 0, $$
and such that for every~$\tau$ in~$\cT_{n + 1}' \= [\tau_{n + 1}^{-}, \tau_{n + 1}^{+}]$ we have
$$ x_{n, \tau} \le \fat^{M_n}(0) \le 0. $$
For each~$\tau$ in~$\cT_{n + 1}'$, let~$x_{n + 1, \tau}$ be the unique point in~$[-1, 0]$ that is mapped to~$x_{n, \tau}$ by~$\fat^{M_n}$.
Clearly, $x_{n + 1, \tau}$ depends continuously with~$\tau$ on~$\cT_{n + 1}'$.
Moreover,
$$ \fat^{M_{n + 1}}(x_{n + 1, \tau})
=
\fat^{M_{n + 1} - M_n}(x_{n, \tau})
=
\fat^{M_{n + 1} - 2M_n} (-1)
=
-1. $$
So for each~$\tau$ in~$\cT_{n + 1}'$ the first property above is satisfied with~$n$ replaced by~$n + 1$.
By the induction hypothesis for every~$\tau$ in~$\cT_{n + 1}'$ we have
$$ \fat^{2M_n}([x_{n + 1, \tau}, - x_{n + 1, \tau}])
\subset
\fat^{M_n}([x_{n, \tau}, - x_{n, \tau}])
\subset
[-1, 1]. $$
Moreover, the pull-back of~$I_{0, a, \tau}$ by~$f^{2M_n - 2}$ containing~$\fat^2(x_{n + 1, \tau})$ is diffeomorphic and~$\fat^{2M_n - 2}$ preserves the orientation on this set.
Since~$M_{n + 1} \ge 2 M_n  + 1$, it follows that there is a parameter~$\tau$ in~$\cT_{n + 1}'$ such that~$\fat^{M_{n + 1}}(0) = 1$.
Let~$\tau_{n + 1}$ be the least number with this property and put~$\cT_{n + 1} \= [\tau_{n + 1}^-, \tau_{n + 1}]$.
Since we also have,
$$ f_{a, \tau_n^-}^{M_{n + 1}}(0)
=
f_{a, \tau_n^-}^{M_{n + 1} - M_n}(x_{n, \tau})
=
f_{a, \tau_n^-}^{M_{n + 1} - 2M_n}(-1)
= -1, $$
the second point is satisfied with~$n$ replaced by~$n + 1$.
To prove that for each~$\tau$ in~$\cT_{n + 1}$ the third point is satisfied with~$n$ replaced by~$n + 1$, note that by the definition of~$\tau_{n + 1}$ for each~$j$ in~$\{0, 1, \ldots, M_{n + 1} - 2 M_n - 1 \}$ we have
\begin{equation}
  \label{e:nth itinerary}
  \fat^{2M_n + j}([x_{n + 1, \tau}, - x_{n + 1, \tau}]) \subset I_{0, a, \tau}.
\end{equation}
Moreover,
$$ \fat^{M_{n + 1}}([x_{n + 1, \tau}, - x_{n + 1, \tau}])
\subset [-1, 1]. $$
It follows that the pull-back of~$[-1, 1]$ by~$\fat^{M_{n + 1} - 2 M_n}$ containing~$x = -1$ is diffeomorphic and contained in~$I_{0, a, \tau}$ and that~$\fat^{M_{n + 1} - 2 M_n}$ preserves the orientation on this set.
By the considerations above this implies that the pull-back of~$[-1, 1]$ by~$\fat^{M_{n + 1} - 2}$ containing~$\fat^2(0)$ is diffeomorphic and that~$\fat^{M_{n + 1} - 2}$ preserves the orientation on this set.
This proves that the third point with~$n$ replaced by~$n + 1$.

To prove the lemma, let~$\tau$ be a parameter in the intersection~$\bigcap_{n = 0}^{+ \infty} \cT_n$ and for each integer~$n \ge 0$ put~$x_n \= x_{n, \tau}$.
Property~A follows directly from the induction hypothesis and the definitions above.
Property~B is given by~\eqref{e:nth itinerary}.
The proof of the lemma is thus complete.
\end{proof}

\begin{lemm}
\label{l:general properties of combinatorial type}
Let~$\underline{M} \= \left( M_n \right)_{n = 0}^{+ \infty}$ be a sequence of integers such that~$M_0 = 2$ and such that for every~$n \ge 0$ we have~$M_{n + 1} \ge 2 M_n + 1$.
Suppose moreover that
\begin{equation}
  \label{e:unbounded combinatorics}
M_{n + 1} - 2M_n \to + \infty
\text{ as }
n \to + \infty  
\end{equation}
and let~$a \ge 20$ and~$\tau$ in~$[0, 2]$ be such that the combinatorics of~$\fat$ is of type~$\underline{M}$.
Then the forward orbit of~$x = 0$ is contained in~$[-1, 1]$ and~$\fat$ is topologically exact on~$\Kat$.
\end{lemm}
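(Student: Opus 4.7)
The lemma has two assertions, and I address each in turn.

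For the forward orbit of $0$ staying in $[-1,1]$, the key observation (from the start of Section~\ref{s:mapping properties}) is that $\fat(\R \setminus [-1,1]) \subset (-\infty, -1)$ for $a > 0$ and $\tau \le 2$, so $\fat^{-1}([-1,1]) \subset [-1,1]$; iterated preimages of $[-1,1]$ stay in $[-1,1]$. Property~A gives, for each $n$, an interval $J_n \supset \fat^2(V_n)$ such that $\fat^{M_n-2}$ maps $J_n$ diffeomorphically onto $[-1,1]$, whence $\fat^j(J_n) \subset [-1,1]$ for $0 \le j \le M_n-2$. Since $\fat^2(0) \in J_n$, this yields $\fat^k(0) \in [-1,1]$ for $2 \le k \le M_n$. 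Combined with $\fat(0) = 1-\tau \in [-1,1]$ and $M_n \to +\infty$ (forced by $M_0 = 2$ and $M_{n+1} \ge 2M_n+1$), the entire forward orbit of $0$ lies in $[-1,1]$.

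For topological exactness, I would first establish the auxiliary claim $\fat^{M_n+1}(V_n) \supset [-1,1]$ for every $n \ge 0$. Property~A gives $\fat^{M_n}(x_n) = -1$ and $\fat^{M_n}(0) \in V_n \cap (-1, 0)$; evenness of $\fat^{M_n}$ (inherited from $\fat$) gives $\fat^{M_n}(-x_n) = -1$ as well. The orientation-preserving diffeomorphism property of $\fat^{M_n-2}|_{J_n}$, combined with $0$ being the unique critical point of $\fat$ in $\Vat$, implies that $0$ is the only critical point of $\fat^{M_n}$ in $V_n$, so $\fat^{M_n}|_{V_n}$ is a branched double cover onto $[-1, \fat^{M_n}(0)]$. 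Lemma~\ref{l:macro}(2) gives $V_n \subset \Vat \subset [-2\sqrt{\tau/a}, 2\sqrt{\tau/a}]$, while Property~A at $n = 0$ forces $\fat(\Vat) = [1-\tau, 1] \subset I_{1,a,\tau} \subset [1-2/a, 1]$, hence $\tau \le 2/a$. So $|\fat^{M_n}(0)| \le 2\sqrt{2}/a < 1 - 2/a$ for $a \ge 20$, and $[-1, \fat^{M_n}(0)] \supset I_{0,a,\tau}$. Since $\fat|_{I_{0,a,\tau}}$ is monotone onto $[-1,1]$, one more iterate yields $\fat^{M_n+1}(V_n) \supset \fat(I_{0,a,\tau}) = [-1,1]$.

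Given this auxiliary claim, topological exactness reduces to showing that for every open interval $W$ meeting $\Kat$, some $\fat^n(W)$ contains $V_k$ for some $k$: then $\fat^{n+M_k+1}(W) \supset [-1,1]$, and because $\fat^{-1}([-1,1]) \subset [-1,1]$ any preimage in $W$ of a point of $\Kat$ is itself in $\Kat$, so $\fat^{n+M_k+1}(W \cap \Kat) = \Kat$. Since the $V_k$ are nested around $0$ with $|V_k| \to 0$ (using $M_{n+1} - 2M_n \to +\infty$ to control the successive pull-backs via branched double covers), it suffices to show that some $\fat^n(W)$ contains $0$ in its interior. I would argue this by exploiting the expansion of Lemma~\ref{l:macro}(3): $|D\fat|$ is comparable to $\lambdaat \ge 40$ on $I_{0,a,\tau} \cup I_{1,a,\tau}$, while each such branch has length at most $2/a$, so iterates of $W$ cannot remain confined to a single branch indefinitely. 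The first time $\fat^n(W)$ protrudes from a single branch, connectedness forces it to span the gap between $I_{0,a,\tau}$ and $I_{1,a,\tau}$, a gap containing $0$.

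The main obstacle is handling iterates that enter $\Vat$, where $|D\fat|$ degenerates near $0$ and can contract $\fat^n(W)$ toward $0$ without ever crossing it. To exclude this scenario I would combine the no-wandering-intervals theorem for $C^3$ maps with negative Schwarzian derivative (available here because all critical points of $\fat$ are real) with the hypothesis $M_{n+1} - 2M_n \to +\infty$: a trapped iterate would correspond to a renormalization of $\fat$ with bounded return times, contradicting the unbounded combinatorics. Making this non-renormalizability step precise is the delicate point and would require the most care in the full proof.
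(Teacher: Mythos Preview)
Your argument for the first assertion is correct and is essentially the paper's.

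For topological exactness there is a genuine gap. Your auxiliary claim $\fat^{M_n+1}(V_n) \supset [-1,1]$ is fine and corresponds to the paper's treatment of the case where some iterate of the open interval contains $0$. The problem is the remaining case. Your assertion that when $\fat^n(W)$ first protrudes from a branch of $I_{0,a,\tau} \cup I_{1,a,\tau}$ ``connectedness forces it to span the gap between $I_{0,a,\tau}$ and $I_{1,a,\tau}$'' is not correct: an interval exiting $I_{0,a,\tau}$ to the right need only reach the gap between $I_{0,a,\tau}$ and $\Vat$, which does not contain $0$. You then defer the case of iterates entering $\Vat$ to a non-renormalizability argument that you do not carry out, and you yourself flag this as the delicate point. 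Your justification that $|V_k| \to 0$ via $M_{n+1} - 2M_n \to \infty$ is likewise not substantiated.

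The paper avoids all of this by first establishing that $\Kat$ has \emph{empty interior}. The hypothesis $M_{n+1} - 2M_n \to \infty$ is used only once, via Property~B, to ensure that $0$ is not asymptotic to a periodic orbit; Singer's theorem (negative Schwarzian) then makes every periodic point hyperbolic repelling, and the no-wandering-intervals theorem yields empty interior. This single fact does all the remaining work. It gives $x_n \to 0$ immediately. And in the case where no $\fat^m(U)$ contains $0$, it forces some $\fat^m(U)$ to escape $I_{0,a,\tau} \cup \Vat \cup I_{1,a,\tau}$ (otherwise $U \subset \Kat$, contradicting empty interior). Taking the least such $m$, the map $\fat^m|_U$ is a diffeomorphism because earlier iterates avoid $0$ and each lies in a single component; hence $\fat^m(U)$ contains a boundary point of one of the three intervals in its interior. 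Two further iterates send this boundary point to the repelling fixed point $-1$, giving a neighborhood of $-1$; empty interior once more makes this neighborhood eventually cover $I_{0,a,\tau}$, and one more iterate gives $[-1,1]$. No expansion estimates or renormalization analysis are needed.
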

\begin{proof}
To prove the first assertion note that for every~$n \ge 0$ the point~$\fat^{M_n}(0)$ is in~$[-1, 1]$.
Since~$M_n \to + \infty$ as~$n \to + \infty$, it follows that for every integer~$m \ge 0$ the point~$\fat^m(0)$ is in~$[-1, 1]$.

To prove that~$\fat$ is topologically exact on~$\Kat$, we first make some preparatory remarks.
Property~B and~\eqref{e:unbounded combinatorics} imply that~$x = 0$ cannot be asymptotic to a periodic cycle.
Using that~$\fat$ has negative Schwarzian derivative, Singer's theorem implies that all the periodic points of~$\fat$ are hyperbolic repelling, see for example~\cite[Theorem~II.$6$.$1$]{dMevSt93}.
Since~$\fat$ has no wandering intervals intersecting~$\Kat$, see for example~\cite[Theorem~A in~{\S}IV]{dMevSt93}, it follows that the interior of~$\Kat$ is empty.
In particular, $x_n \to 0$ as~$n \to + \infty$.

We proceed to the proof that~$\fat$ is topologically exact on~$\Kat$.
To do this, let~$U$ be an open interval intersecting~$\Kat$.
If for some integer~$m \ge 1$ the interval~$\fat^m(U)$ contains~$x = 0$, then there is an integer~$n \ge 0$ such that~$\fat^m(U)$ contains either~$[x_n, 0]$ or~$[0, - x_n]$.
In both cases~$I_{0, a, \tau} \subset \fat^{M_n + m}(U)$ and therefore~$[-1, 1] \subset \fat^{M_n + m + 1}(U)$.
It remains to consider the case where for each integer~$m \ge 0$ the interval~$\fat^m(U)$ does not contain~$x = 0$.
Since~$\Kat$ has empty interior, it follows that there is an integer~$m \ge 0$ such that~$\fat^m(U)$ is not contained in~$I_{0, a, \tau} \cup \Vat \cup I_{1, a, \tau}$.
If~$m$ is the least integer with this property, then~$\fat^m$ maps~$U$ diffeomorphically to~$\fat^m(U)$.
Thus~$\fat^m(U)$ contains one of the points~$\partial I_{0, a, \tau} \cup \partial \Vat \cup \partial I_{1, a, \tau}$ in its interior and therefore~$\fat^{m + 2}(U)$ contains a neighborhood of the hyperbolic repelling fixed point~$x = -1$.
Again using that~$\Kat$ has empty interior we conclude that there is an integer~$N \ge 0$ such that~$\fat^{m + N + 2}(U)$ contains~$I_{0, a, \tau}$ and therefore~$\fat^{m + N + 3}(U) \supset [-1, 1]$.
This completes the proof that~$\fat$ is topologically exact on~$\Kat$ and of the lemma.
\end{proof}

\section{Main estimates}
\label{s:estimates}
We start this section chosing the map used to prove the Main Lemma, through its combinatorial type.
The rest of this section is devoted to prove some estimates concerning this map (Lemmas~\ref{l:close return expansion} and~\ref{l:long branch}), that are used in the proof of the Main Lemma in the next section.

Fix~$\eta$ in~$(1, 2)$, let~$a_0 \ge 20$ be given by Lemma~\ref{l:macro} for this choice of~$\eta$, and fix~$a$ satisfying
\begin{equation}
  \label{e:main parameter lower bound}
  a > \max \{a_0, 128, \eta^{20} \}.
\end{equation}
Define a sequence of integers~$\underline{M} \= (M_n)_{n = 0}^{+ \infty}$ inductively by~$M_0 = 2$ and by the property that for every~$n \ge 0$ we have
\begin{equation}
\label{e:return times growth 0}
\eta^{M_{n + 1}}
\ge
4 \eta^{5M_n/2} (2a + 8)^{M_n / 2}.
\end{equation}
Note that for each~$n \ge 0$ we have
\begin{equation}
\label{e:return times grow exponentially}
M_{n + 1}
\ge
5M_n/2
\end{equation}
and~$M_{n + 1} - 2M_n \ge M_n/2 \ge 2^n$.
In particular,
$$ M_{n + 1} - 2M_n \to + \infty
\text{ as }
n \to + \infty. $$

Let~$\tau$ in~$[0, 2]$ be such that the combinatorics of~$\fat$ is of type~$\underline{M}$ (Lemma~\ref{l:realization}).
In what follows we put
$$ f \= \fat,
K \= \Kat,
I_0 \= I_{0, a, \tau},
\emph{etc}. $$
Note that, by~\eqref{e:main parameter lower bound} and by the definition of~$\lambda$, we have
\begin{equation}
  \label{e:subexponential error}
  \lambda > \max \{256, \eta^{20} \}.
\end{equation}
Moreover, for every integer~$n \ge 0$ we have by~\eqref{e:return times growth 0},
\begin{equation}
\label{e:return times growth}
\eta^{M_{n + 1}}
\ge
4 \eta^{5M_n/2} \lambda^{M_n / 2}.
\end{equation}
\begin{lemm}
\label{l:close return expansion}
For every integer~$n \ge 0$ the following properties hold.
\begin{enumerate}[1.]
\item 
On~$J_n$ we have
$$ \eta^{- (M_n - 2)} \lambda^{M_n - 2}
\le
|Df^{M_n - 2}|
\le
\eta^{M_n - 2} \lambda^{M_n - 2}. $$
\item
We have
\begin{equation}
  \label{e:cutting point}
\eta^{- M_n/2} \lambda^{- M_n/2}
\le
|x_n|
\le
\sqrt{2} \eta^{M_n/2} \lambda^{- M_n/2}
\end{equation}
and
\begin{equation}
\label{e:close return derivative}
\eta^{- (3 M_n/2 - 2)} \lambda^{M_n/2}
\le
|Df^{M_n}(x_n)|
\le
\sqrt{2} \eta^{3 M_n/2 - 2} \lambda^{M_n/2}.
\end{equation}
\end{enumerate}
\end{lemm}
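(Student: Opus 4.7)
The proof is by induction on $n$, establishing parts 1 and 2 simultaneously. For the base case $n = 0$, part 1 is trivial since $M_0 - 2 = 0$. For part 2 we apply Lemma~\ref{l:macro}(3) to $x = x_0$: by property A one has $f^2(x_0) = -1$, while property A together with Lemma~\ref{l:macro}(2) places $f^2(0) \in V_0 \cap (-1, 0) \subset (-2\sqrt{\tau/a}, 0)$, so $|f^2(0) - f^2(x_0)| = |f^2(0) + 1|$ lies in a fixed subinterval of $(1/2, 1)$ for $a > 128$. Lemma~\ref{l:macro}(3) then produces exactly the asserted bounds on $|x_0|$ and $|Df^2(x_0)|$.

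For the inductive step, assume parts 1 and 2 at level $n$. The heart of the argument is a sequence of three mean value estimates. First, property B places $f^{2M_n}(0), \ldots, f^{M_{n+1}-1}(0)$ in $I_0$ and property A places $f^{M_{n+1}}(0)$ in $V_{n+1}$, so $|f^{M_{n+1}}(0) + 1|$ is bounded between fixed constants. Since $-1$ is a hyperbolic fixed point with $Df(-1) = \lambda$, and since iterates of $f$ along orbits staying in $I_0$ have bounded distortion (by a Koebe-type argument using an extension of $f|_{I_0}$ to a diffeomorphism onto a larger interval, the accumulated distortion being kept bounded by the choice $a > \eta^{20}$), the mean value theorem applied on the interval from $-1$ to $f^{2M_n}(0)$ gives that $|f^{2M_n}(0) + 1|$ is comparable to $\lambda^{-(M_{n+1} - 2M_n)}$. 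Next, on the subinterval $[x_n, f^{M_n}(0)] \subset V_n$, which is short compared to $|x_n|$ because $M_{n+1} \ge 2M_n + 1$, the derivative $|Df^{M_n}|$ is comparable to $A_n$ and hence to $\lambda^{M_n/2}$ by induction together with Lemma~\ref{l:macro}(3); the mean value theorem then gives that $|f^{M_n}(0) - x_n|$ is comparable to $\lambda^{-(M_{n+1} - 3M_n/2)}$. Finally, since $f^{M_n-2}|_{J_n}$ maps $f^2(0)$ to $f^{M_n}(0)$ and $f^2(x_{n+1})$ to $x_n$ with derivative comparable to $\lambda^{M_n - 2}$ by induction part 1, a third application of the mean value theorem shows that $|f^2(0) - f^2(x_{n+1})|$ is comparable to $\lambda^{-(M_{n+1} - M_n/2 - 2)}$.

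Part 2 at level $n+1$ now follows from Lemma~\ref{l:macro}(3) applied at $x_{n+1} \in V$, which recovers $|x_{n+1}|$ and $|Df^2(x_{n+1})|$ from $|f^2(0) - f^2(x_{n+1})|^{1/2}$. The derivative $|Df^{M_{n+1}}(x_{n+1})|$ is then computed via the chain rule
$$|Df^{M_{n+1}}(x_{n+1})| = |Df^2(x_{n+1})| \cdot |Df^{M_n - 2}(f^2(x_{n+1}))| \cdot |Df^{M_n}(x_n)| \cdot \lambda^{M_{n+1} - 2M_n},$$
each factor being controlled by the estimates above and the induction hypothesis. For part 1 at level $n+1$, Lemma~\ref{l:macro}(1) bounds the distortion of $f^{M_{n+1} - 2}$ on $J_{n+1}$ by $\eta$, so it suffices to compute the derivative at the single point $f^2(x_{n+1}) \in J_{n+1}$, which equals $|Df^{M_{n+1}}(x_{n+1})|/|Df^2(x_{n+1})|$ and is thus already under control.

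The main obstacle is bookkeeping the multiplicative $\eta$-errors accumulated through the three mean value estimates, the chain rule, and Lemma~\ref{l:macro}(3). The growth condition~\eqref{e:return times growth} on the sequence $\underline{M}$ was tailored exactly so that the factor $\eta^{5 M_n/2}$ appearing in the accumulated error (coming from two invocations of induction part 2 at level $n$ and one invocation of induction part 1 on $J_n$) fits within the target budgets $\eta^{M_{n+1}/2}$ for $|x_{n+1}|$ and $\eta^{3M_{n+1}/2 - 2}$ for the derivative bounds, with the $\lambda^{M_n/2}$ factor in~\eqref{e:return times growth 0} absorbing the contribution of $A_n$.
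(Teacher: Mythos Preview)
Your inductive scheme differs from the paper's in a way that makes the argument both harder and incomplete.  The paper does \emph{not} prove part~2 at level $n+1$ from parts~1 and~2 at level~$n$; it proves part~2 at level~$n$ directly from part~1 at level~$n$, via the single observation that $f^{M_n}(V_n)=[-1,f^{M_n}(0)]$ has length in $[3/4,1]$ (because $f^{M_n}(0)\in V_n\subset[-1/4,1/4]$).  One pull-back through $f^{M_n-2}|_{J_n}$ (part~1) gives $|f^2(V_n)|=|f^2(0)-f^2(x_n)|$ up to the required accuracy, and Lemma~\ref{l:macro}(3) finishes part~2 at level~$n$.  Part~1 at level~$n+1$ is then obtained exactly as you describe at the end: the chain rule along the orbit of $x_{n+1}$ (which passes through $x_n$ and the fixed point $-1$) together with the distortion bound from Lemma~\ref{l:macro}(1).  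No tracking of the orbit of~$0$, and no estimate of $|f^{M_n}(0)-x_n|$, is needed anywhere.

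Your three-step route to $|f^2(0)-f^2(x_{n+1})|$ has a genuine gap at Step~2.  You assert that $[x_n,f^{M_n}(0)]$ is ``short compared to $|x_n|$ because $M_{n+1}\ge 2M_n+1$'', and that therefore $|Df^{M_n}|$ is comparable to $|Df^{M_n}(x_n)|$ on this interval.  But property~A only places $f^{M_n}(0)$ in $(x_n,0)$, and $|Df^2|$ degenerates near~$0$; nothing you have written excludes $f^{M_n}(0)$ from lying close to~$0$.  One can close this gap --- by first pulling $|f^{2M_n}(0)+1|$ back through $f^{M_n-2}|_{J_n}$ to see that $f^2(f^{M_n}(0))$ is within a factor $\lambda^{-(M_{n+1}-2M_n)}$ of $f^2(x_n)$ inside $[f^2(x_n),f^2(0)]$, and then invoking Lemma~\ref{l:macro}(3) --- but this extra work, together with the separate Koebe-type distortion argument you need on $I_0$ for Step~1, makes your proof strictly longer than the paper's while using the same ingredients.  (The symbol $A_n$ in Step~2 is also undefined.)  The cleaner fix is to drop Steps~1--3 entirely, prove part~1 at $n+1$ first via the chain-rule computation you already have, and then get part~2 at $n+1$ from it by the one-line length argument above.
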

\begin{proof}
Since~$M_0 = 2$, part~$1$ holds trivially when~$n = 0$.
Suppose by induction that part~$1$ holds for some integer~$n \ge 0$.
Property~A implies~$f^{M_n}(V_n) = [-1, f^{M_n}(0)]$.
Since~$f^{M_n}(0)$ is~$V_n \cap (-1, 0)$ (property~A) and since this last set is contained in~$[-1/4, 0]$ (\eqref{e:main parameter lower bound} and part~$2$ of Lemma~\ref{l:macro}), the induction hypothesis implies that
$$ \frac{1}{2} \eta^{-(M_n - 2)} \lambda^{-(M_n - 2)}
\le
|f^2(V_n)|
\le
\eta^{M_n - 2} \lambda^{-(M_n - 2)}. $$
Thus, by part~$3$ of Lemma~\ref{l:macro} we obtain~\eqref{e:cutting point} and
$$ \eta^{- M_n/2} \lambda^{- M_n/2 + 2}
\le
|Df^2(x_n)|
\le
\sqrt{2} \eta^{M_n/2} \lambda^{- M_n/2 + 2}. $$
Using the induction hypothesis again we obtain~\eqref{e:close return derivative}.

To prove part~$1$ with~$n$ replaced by~$n + 1$, note first that by the induction hypothesis we have
$$ \eta^{- (M_n - 2)} \lambda^{M_n - 2}
\le
|Df^{M_n - 2}(f^2(x_{n + 1}))|
\le
\eta^{M_n - 2} \lambda^{M_n - 2}. $$
Since~$f^{M_n}(x_{n + 1}) = x_n$, by~\eqref{e:close return derivative} we have
$$ \eta^{-(5M_n/2 - 4)} \lambda^{3 M_n/2 - 2}
\le
|Df^{2M_n - 2}(f^2(x_{n + 1}))|
\le
\sqrt{2} \eta^{5M_n/2 - 4} \lambda^{3 M_n/2 - 2}. $$
Using that~$f^{2M_n}(x_{n + 1}) = -1$, we obtain
\begin{multline*}
  \eta^{-(5M_n/2 - 4)} \lambda^{M_{n + 1} - M_n/2 - 2}
\le
|Df^{M_{n + 1} - 2}(f^2(x_{n + 1}))|
\\ \le
\sqrt{2} \eta^{5M_n/2 - 4} \lambda^{M_{n + 1} - M_n/2 - 2}. 
\end{multline*}
Using part~$1$ of Lemma~\ref{l:macro} and~\eqref{e:return times growth} we obtain part~$1$ of the lemma with~$n$ replaced by~$n + 1$.
This completes the proof of the induction step and of the lemma.
\end{proof}

Define the sequence of intervals~$(U_n)_{n = 0}^{+ \infty}$ inductively as follows.
Put~$U_0 = [-1, 1] \setminus (I_0 \cup I_1)$ and note that~$U_0$ contains~$V_0$.
Let~$n \ge 0$ be an integer such that~$U_n$ is already defined and contains~$V_n$.
Then, let~$U_{n + 1}$ be the pull-back of~$U_n$ by~$f^{M_n}$ containing~$x = 0$.
By definition~$U_{n + 1}$ contains~$V_{n + 1}$.

Using that the closure of~$U_0$ is contained in~$(-1, 1)$, an induction argument shows that for every~$n \ge 0$ the interval~$V_n$ contains the closure of~$U_{n + 1}$ in its interior.
So, if for each~$n \ge 0$ we denote by~$y_n$ the left end point of~$U_n$, then for every~$n \ge 0$ we have
$$ f^{M_n}(y_{n + 1}) = y_n
\text{ and }
y_n < x_n < y_{n + 1} < 0. $$ 
\begin{lemm}
\label{l:long branch}
For every integer~$n \ge 0$ we have
\begin{equation}
  \label{e:gap point}
|y_{n + 1}|
\ge
|x_{n + 1} - y_{n + 1}|
\ge
\eta^{- M_n} \lambda^{- M_n/2}.
\end{equation}
Moreover, on~$V_n \setminus U_{n + 1}$ we have
\begin{equation}
  \label{e:gap derivative}
  |Df^{M_n}|
\ge
\eta^{-2 M_n} \lambda^{M_n/2}.
\end{equation}
\end{lemm}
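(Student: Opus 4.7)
The first inequality $|y_{n+1}| \ge |x_{n+1} - y_{n+1}|$ is immediate from the ordering $y_{n+1} < x_{n+1} < 0$ recorded just before the lemma, which gives $|y_{n+1}| = |x_{n+1}| + |x_{n+1} - y_{n+1}|$. My plan for the remaining bounds is induction on $n$. The base case $n = 0$ will be handled by a direct computation with $f^{M_0} = f^2$, Lemma~\ref{l:macro}, and the explicit form of $f$ near $-1$ and $0$; the key input is that $x_0 - y_0$ is comparable to $1$ (since $x_0$ is close to $0$ and $y_0$ is close to $-1$).

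For the inductive step, the engine is a quadratic-type estimate for $f^{M_n}$ near the critical point $0$. Combining $|Df^2(x)| \in [\eta^{-2}\lambda^2|x|, \eta^2\lambda^2|x|]$ on $V$ (from Lemma~\ref{l:macro} part~$3$) with the bound on $|Df^{M_n - 2}|$ on $J_n$ from Lemma~\ref{l:close return expansion} part~$1$, one gets for $x \in V_n$
\[\eta^{-M_n}\lambda^{M_n}|x| \le |Df^{M_n}(x)| \le \eta^{M_n}\lambda^{M_n}|x|,\]
and, integrating from $0$,
\[\tfrac12\eta^{-M_n}\lambda^{M_n}x^2 \le |f^{M_n}(x) - f^{M_n}(0)| \le \tfrac12\eta^{M_n}\lambda^{M_n}x^2.\]
Applying the lower half at $x = y_{n+1}$ and using $f^{M_n}(y_{n+1}) = y_n$ together with $f^{M_n}(0) \ge x_n$ (since $f^{M_n}(0) \in V_n$), I obtain $|y_{n+1}|^2 \ge 2\eta^{-M_n}(x_n - y_n)/\lambda^{M_n}$. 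The inductive hypothesis $x_n - y_n \ge \eta^{-M_{n-1}}\lambda^{-M_{n-1}/2}$ combined with the growth condition~\eqref{e:return times growth} (which implies $\eta^{M_n - M_{n-1}} \ge 4\lambda^{M_{n-1}/2}$) then forces $|y_{n+1}| \ge 2\eta^{-M_n}\lambda^{-M_n/2}$, with a factor of $2$ to spare. The derivative bound on $V_n \setminus U_{n+1}$ is now immediate: for $x$ in this set $|x| \ge |y_{n+1}|$, so $|Df^{M_n}(x)| \ge \eta^{-M_n}\lambda^{M_n}|y_{n+1}| \ge \eta^{-2M_n}\lambda^{M_n/2}$.

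For the sharper bound $|x_{n+1} - y_{n+1}| \ge \eta^{-M_n}\lambda^{-M_n/2}$, I apply the upper half of the quadratic estimate at $x = x_{n+1}$. Using $f^{M_n}(x_{n+1}) = x_n$ and $f^{M_n}(0) - x_n \le 2|x_n|$ (from $f^{M_n}(0) \le -x_n$), together with the bound $|x_n| \le \sqrt{2}\,\eta^{M_n/2}\lambda^{-M_n/2}$ from Lemma~\ref{l:close return expansion}, I get
\[|x_{n+1}|^2 \le \frac{2\eta^{M_n}(f^{M_n}(0) - x_n)}{\lambda^{M_n}} \le \frac{4\sqrt{2}\,\eta^{3M_n/2}}{\lambda^{3M_n/2}}.\]
The hypothesis $\lambda > \eta^{20}$, together with \eqref{e:return times growth}, should force $|x_{n+1}| \le \eta^{-M_n}\lambda^{-M_n/2}$, so that
\[|x_{n+1} - y_{n+1}| = |y_{n+1}| - |x_{n+1}| \ge 2\eta^{-M_n}\lambda^{-M_n/2} - \eta^{-M_n}\lambda^{-M_n/2} = \eta^{-M_n}\lambda^{-M_n/2}.\]
I expect this last quantitative comparison to be the main technical obstacle: namely, verifying that the rapid growth of $M_n$ relative to $M_{n-1}$ guaranteed by~\eqref{e:return times growth}, together with $\lambda > \eta^{20}$, is enough to dominate the factor $\eta^{7M_n/4}$ in the upper bound on $|x_{n+1}|$ by the corresponding power $\lambda^{M_n/4}$. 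Care must be taken with the base case $n = 0$ separately, where $M_n$ is small and the inequality must be established by hand from the estimate $x_0 - y_0 \ge 1 - O(1/\lambda)$.
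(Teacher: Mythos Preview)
Your approach is essentially the paper's. The paper also proceeds by induction on~$n$ (handling $n=0$ separately via the explicit bound $|x_0 - y_0| \ge 5/8$), combines part~$3$ of Lemma~\ref{l:macro} with part~$1$ of Lemma~\ref{l:close return expansion} to obtain the same quadratic relation between $|y_{n+1}|$ and $f^{M_n}(0) - y_n$, and derives~\eqref{e:gap derivative} exactly as you do, from $|x| \ge |y_{n+1}|$ on $V_n \setminus U_{n+1}$.

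Two minor remarks. First, you have ``lower half'' and ``upper half'' swapped in both applications: to bound $|y_{n+1}|$ from below via a lower bound on $|f^{M_n}(0)-f^{M_n}(y_{n+1})|$ you need the \emph{upper} inequality $|f^{M_n}(x)-f^{M_n}(0)|\le\tfrac12\eta^{M_n}\lambda^{M_n}x^2$, and vice versa for $|x_{n+1}|$; the conclusions you draw are nonetheless the correct ones. Second, for bounding $|x_{n+1}|$ the paper takes a shorter path: it simply quotes~\eqref{e:cutting point} at level $n+1$, giving $|x_{n+1}|\le\sqrt{2}\,\eta^{M_{n+1}/2}\lambda^{-M_{n+1}/2}$, and then invokes~\eqref{e:return times growth} and~\eqref{e:subexponential error}. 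Your route also works, but be aware that $\lambda>\eta^{20}$ alone does not yield the needed $32\,\eta^{7M_n}\le\lambda^{M_n}$ when $\eta$ is close to~$1$; you should use $\lambda>256$ and $\eta<2$ from~\eqref{e:subexponential error} (so that $\lambda/\eta^7>2$) together with $M_n\ge 5$ for $n\ge 1$.
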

\begin{proof}
Note that, since~$I_0$ is contained in~$[-1, -7/8]$ and~$V$ in~$[-1/4, 1/4]$ (\eqref{e:main parameter lower bound} and part~$2$ of Lemma~\ref{l:macro}), we have $|x_0 - y_0| \ge 5/8$.
On the other hand, for every integer~$n \ge 0$ the point~$f^2(y_{n + 1})$ is contained in~$f^2(V_n) \subset J_n$, so by part~$1$ of Lemma~\ref{l:close return expansion} we have
\begin{equation*}
|f^2(0) - f^2(y_{n + 1})|
\ge
|f^2(x_{n + 1}) - f^2(y_{n + 1})|
\ge
|x_n - y_n| \eta^{-(M_n - 2)} \lambda^{-(M_n - 2)}.
\end{equation*}
Combined with part~$3$ of Lemma~\ref{l:macro} this implies
\begin{equation}
\label{e:bridge size}
|y_{n + 1}|
\ge
\sqrt{2} |x_n - y_n|^{1/2} \eta^{- M_n/2} \lambda^{- M_n/2}.
\end{equation}
When~$n = 0$ we obtain~$|y_1| \ge \sqrt{5/4} \eta^{-1} \lambda^{-1}$.
Together with~\eqref{e:subexponential error}, with \eqref{e:return times growth} with~$n = 0$, and with~\eqref{e:cutting point} with~$n = 1$, we have
$$ |x_1|
\le
\sqrt{2} \eta^{M_1/2} \lambda^{- M_1/2}
\le
\sqrt{2} \eta^{- 2 M_1}
\le
\eta^{-2} \lambda^{- 2}
\le
|y_1| - \eta^{- 2} \lambda^{- 1}. $$
This implies~\eqref{e:gap point} when~$n = 0$.
To prove that~\eqref{e:gap point} holds for~$n \ge 1$, we proceed by induction.
Let~$n \ge 1$ be an integer such that~\eqref{e:gap point} holds with~$n$ replaced by~$n - 1$.
Using~\eqref{e:return times growth}, \eqref{e:bridge size}, and the induction hypothesis, we obtain
\begin{equation}
\label{e:gap size n}
|y_{n + 1}|
\ge
\sqrt{2} \eta^{- M_n/2 - M_{n - 1}/2} \lambda^{- M_n/2 - M_{n - 1}/4}
\ge
\sqrt{2} \eta^{- M_n} \lambda^{- M_n/2}.
\end{equation}
Together with~\eqref{e:return times grow exponentially}, \eqref{e:subexponential error}, \eqref{e:return times growth}, and~\eqref{e:cutting point} with~$n$ replaced by~$n + 1$, we have
\begin{multline*}
|x_{n + 1}|
\le
\sqrt{2} \eta^{M_{n + 1}/2} \lambda^{- M_{n + 1}/2}
\le
\sqrt{2} \eta^{- M_{n + 1}}
\le
(\sqrt{2} - 1) \eta^{- M_n} \lambda^{- M_n/2}
\\ \le
|y_{n + 1}| - \eta^{-M_n} \lambda^{- M_n/2}.
\end{multline*}
This completes the proof of the induction step and of the fact that~\eqref{e:gap point} holds for every~$n \ge 0$.

To prove~\eqref{e:gap derivative}, let~$n \ge 0$ be an integer and note that by~\eqref{e:gap point} and by part~$3$ of Lemma~\ref{l:macro}, for each~$x$ in~$[x_n, y_{n + 1}]$ we have
$$ |Df^2(- x)|
=
|Df^2(x)|
\ge
\eta^{-2} \lambda^{2} |x|
\ge
\eta^{-2} \lambda^{2} |y_{n + 1}|
\ge
\eta^{- M_n - 2} \lambda^{- M_n/2 + 2}. $$
Together with part~$1$ of Lemma~\ref{l:close return expansion}, this implies~\eqref{e:gap derivative}.
The proof of the lemma is thus complete.
\end{proof}

\section{Proof of the Main Lemma}
\label{s:proof of the Main Lemma}
After showing some general properties of the dynamics of the map chosen in the previous section (Lemma~\ref{l:showing CE}), in this section we give the proof of the Main Lemma.

Throughout this section we use the notation introduced in the previous section.
Note that part~$1$ of Lemma~\ref{l:macro} implies that on~$I_0 \cup I_1$ we have
\begin{equation}
\label{e:hyperbolic part}
\eta^{-1} \lambda \le |Df| \le \eta \lambda.
\end{equation}
\begin{lemm}
\label{l:showing CE}
The map~$f$ is topologically exact on the maximal invariant set~$K$ of~$f$ in~$[-1, 1]$ and~$f$ satisfies the Collet-Eckmann condition.
\end{lemm}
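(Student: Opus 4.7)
Topological exactness of $f$ on $K$ is immediate from Lemma~\ref{l:general properties of combinatorial type}: by construction the combinatorics of $f = \fat$ is of type $\underline{M}$, and the sequence chosen in Section~\ref{s:estimates} satisfies $M_{n+1} - 2 M_n \ge M_n/2 \to + \infty$, so the hypotheses of that lemma are met. The rest of the proof is devoted to the Collet-Eckmann condition.

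The plan is to estimate $|Df^{M_n - 1}(f(0))|$ from below along the subsequence of close-return times $M_n$ by an inductive application of the chain rule, and then interpolate. Decomposing
\begin{equation*}
|Df^{M_{n+1} - 1}(f(0))|
=
|Df^{M_{n+1} - M_n - 2}(f^{M_n + 2}(0))|
\cdot
|Df^2(f^{M_n}(0))|
\cdot
|Df^{M_n - 1}(f(0))|,
\end{equation*}
the first factor splits further: by part~$1$ of Lemma~\ref{l:close return expansion}, the piece $|Df^{M_n - 2}(f^{M_n + 2}(0))|$ on $J_n$ is at least $\eta^{-(M_n - 2)} \lambda^{M_n - 2}$, and by property~B together with~\eqref{e:hyperbolic part} the piece $|Df^{M_{n+1} - 2 M_n}(f^{2 M_n}(0))|$, whose orbit lies entirely in $I_{0,a,\tau}$, is at least $(\eta^{-1} \lambda)^{M_{n+1} - 2 M_n}$. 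The middle factor is the decisive one: by part~$3$ of Lemma~\ref{l:macro} it satisfies $|Df^2(f^{M_n}(0))| \ge \eta^{-2} \lambda^2 |f^{M_n}(0)|$, so the heart of the matter is a sufficiently strong lower bound on $|f^{M_n}(0)|$.

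The main obstacle is establishing $|f^{M_n}(0)| \ge \tfrac{1}{2} |x_n|$, and hence $|f^{M_n}(0)| \gtrsim \lambda^{-M_n/2}$, rather than the much weaker $\lambda^{-M_{n+1}/2}$ that the direct bound $|f^{M_n}(0)| \ge |x_{n+1}|$ (which follows from $f^{M_n}(0) \notin V_{n+1}$, since otherwise $f^{2 M_n}(0)$ would lie in both $V_n$ and $I_{0,a,\tau}$, contradicting part~$2$ of Lemma~\ref{l:macro}) would yield. The strategy is to pull back the near-fixed-point dynamics at~$-1$. By property~B, $f^{2 M_n + j}(0) \in I_{0,a,\tau}$ for $j = 0, \ldots, M_{n+1} - 2 M_n - 1$ but escapes at the next iterate; since the map is approximately linear near $-1$ with multiplier $\lambda$, this forces
\begin{equation*}
|f^{2 M_n}(0) + 1| \le C \lambda^{-(M_{n+1} - 2 M_n)}
\end{equation*}
for some constant $C$. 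Writing $f^{2 M_n}(0) + 1 = f^{M_n}(f^{M_n}(0)) - f^{M_n}(x_n)$, the mean value theorem combined with $|Df^{M_n}(x_n)| \ge \eta^{-(3 M_n/2 - 2)} \lambda^{M_n/2}$ from part~$2$ of Lemma~\ref{l:close return expansion} (and a routine distortion bound for $f^{M_n}$ on a small neighborhood of $x_n$) gives
\begin{equation*}
|f^{M_n}(0) - x_n| \le C' \eta^{3 M_n/2} \lambda^{-(M_{n+1} - 3 M_n/2)},
\end{equation*}
which, thanks to $M_{n+1} \ge 5 M_n/2$ and $\lambda > \eta^{20}$, is much smaller than $|x_n|$ for all but finitely many $n$.

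Collecting these estimates produces a recursion of the form
\begin{equation*}
|Df^{M_{n+1} - 1}(f(0))|
\ge
C'' \eta^{-\alpha M_{n+1}} \lambda^{\beta (M_{n+1} - M_n)}
\cdot
|Df^{M_n - 1}(f(0))|
\end{equation*}
with explicit $\alpha, \beta > 0$, whose per-stage logarithmic growth rate is bounded below by a uniform positive constant thanks to $\lambda > \eta^{20}$ and $M_{n+1} \ge 5 M_n/2$. Iterating gives exponential growth of $|Df^{M_n - 1}(f(0))|$ in $M_n$. To pass to arbitrary times, I note that for $m \in [M_n, M_{n+1}]$ the extra factor $|Df^{m - M_n + 1}(f^{M_n}(0))|$ loses at most the single small term $|Df(f^{M_n}(0))| \gtrsim \lambda^{1 - M_n/2}$ at the close return, while subsequent factors are uniformly bounded below since the orbit then visits $I_1$, $J_n$, or $I_{0,a,\tau}$; this intermediate loss is exponentially small in $M_n$ and is absorbed by the exponential growth of $|Df^{M_k - 1}(f(0))|$ along the subsequence, yielding the Collet-Eckmann condition for all $n$.
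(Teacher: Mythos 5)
The exactness half and your overall two-step scheme (lower bounds at the close-return times $M_n$, then interpolation) match the paper, and you correctly identify the crux: a lower bound on $|f^{M_n}(0)|$ of order $\lambda^{-M_n/2}$. But your derivation of $|f^{M_n}(0)-x_n|\ll|x_n|$ has a genuine gap. The mean value theorem requires a lower bound for $|Df^{M_n}|$ on the \emph{whole} interval $[x_n,f^{M_n}(0)]$, and the ``routine distortion bound for $f^{M_n}$ on a small neighborhood of $x_n$'' begs the question: a priori you only know $f^{M_n}(0)\in[x_n,x_{n+1})$, and since $|Df^2(x)|\asymp\lambda^2|x|$ by part~$3$ of Lemma~\ref{l:macro}, the distortion of $f^{M_n}=f^{M_n-2}\circ f^2$ on that interval can be as large as $|x_n|/|x_{n+1}|\asymp\lambda^{(M_{n+1}-M_n)/2}$. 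If instead you use the infimum of the derivative, which is only of order $\lambda^{M_n-M_{n+1}/2}$ (up to powers of $\eta$), then with just the facts you invoke ($M_{n+1}\ge 5M_n/2$ and $\lambda>\eta^{20}$) the resulting bound on $|f^{M_n}(0)-x_n|$ is of order $\lambda^{-M_n/4}$, which is \emph{larger} than $|x_n|\asymp\lambda^{-M_n/2}$, so the step fails as written. (Also, the ``constant'' $C$ in $|f^{2M_n}(0)+1|\le C\lambda^{-(M_{n+1}-2M_n)}$ is really $\eta^{M_{n+1}-2M_n}$, which by~\eqref{e:return times growth} is at least $4\eta^{M_n/2}\lambda^{M_n/2}$; it turns out to be absorbable only because $\lambda>\eta^{20}$.) The paper sidesteps all of this structurally: the orbit of~$0$ lies in~$K$, the gap $(y_{n+1},x_{n+1})\subset U_{n+1}\setminus V_{n+1}$ is disjoint from~$K$, hence $x_n\le f^{M_n}(0)\le y_{n+1}$, and~\eqref{e:gap point} gives $|f^{M_n}(0)|\ge|y_{n+1}|\ge\eta^{-M_n}\lambda^{-M_n/2}$ at once. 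Your route can be repaired (e.g.\ integrate $|Df^{M_n}(x)|\ge\eta^{-M_n}\lambda^{M_n}|x|$ over $[x_n,x_n/2]$ under the contradiction hypothesis $|f^{M_n}(0)-x_n|\ge|x_n|/2$, or exploit the full strength of~\eqref{e:return times growth}, which forces $M_{n+1}$ to exceed a large multiple of $M_n$), but some such argument must replace the unjustified distortion claim.

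The interpolation step also contains an error. The assertion that after the close return the ``subsequent factors are uniformly bounded below since the orbit then visits $I_1$, $J_n$, or $I_{0,a,\tau}$'' is false: because $f^2(0)$ and $f^{M_n+2}(0)$ both lie in $J_n$ and $f^{M_n-2}|_{J_n}$ is diffeomorphic onto $[-1,1]$, for $m$ between $M_n+2$ and $2M_n$ the orbit shadows its own initial segment, and in particular at times $M_n+M_k$ with $k<n$ it returns to distance $\asymp\lambda^{-M_k/2}$ from the critical point, where the one-step derivative is of order $\lambda^{1-M_k/2}$ -- not uniformly bounded below. This is precisely where the paper's proof uses a self-similarity induction: by part~$1$ of Lemma~\ref{l:macro} applied to the pull-back $J_n'$ of $J_n$ containing $f(0)$, it compares $|Df^{m-M_n}(f^{M_n+1}(0))|$ with $|Df^{m-M_n}(f(0))|$ up to a factor $\eta$, and then invokes the induction hypothesis, i.e.\ the Collet-Eckmann bound already established at the earlier time $m-M_n$. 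Without this recursive comparison (or some substitute accounting for the nested close returns inside the window $\{M_n+2,\dots,2M_n\}$), the passage from the subsequence $(M_n)_{n\ge0}$ to all times is not established.
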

\begin{proof}
That~$f$ is topologically exact on~$K$ is given by Lemma~\ref{l:general properties of combinatorial type}.
To prove that~$\fat$ satisfies the Collet-Eckmann condition, in part~$1$ we show that for every integer~$n \ge 0$ we have
\begin{equation}
  \label{e:derivative at closest returns}
  |Df^{M_n}(f(0))|
\ge
\eta^{- 2M_n - 2} \lambda^{- M_n / 2}.
\end{equation}
In part~$2$ we complete the proof of the lemma using this fact.

\partn{1}
By part~$1$ of Lemma~\ref{l:close return expansion} and~\eqref{e:hyperbolic part}, we have
\begin{equation}
  \label{e:derivative before close return}
|Df^{M_n - 1}(f(0))|
=
|Df^{M_n - 2}(f^2(0))| \cdot |Df(f(0))|
\ge
(\eta^{-1} \lambda)^{M_n - 1}.
\end{equation}
To estimate~$Df(f^{M_n}(0))$, note that the property that~$f^{2M_n}(0)$ is in~$I_0$, implies~$x_n \le f^{M_n}(0) < x_{n + 1}$.
Since~$(y_{n + 1}, x_{n + 1}) \subset U_{n + 1} \setminus V_{n + 1}$ is disjoint from~$K$, it follows that~$x_n \le f^{M_n}(0) \le y_{n + 1}$.
By~\eqref{e:hyperbolic part}, by part~$3$ of Lemma~\ref{l:macro}, by~\eqref{e:gap point}, and by~\eqref{e:gap derivative}, we have
\begin{multline*}
|Df(f^{M_n}(0))|
=
|Df^2(f^{M_n}(0))| \cdot |Df(f^{M_n  + 1}(0))|^{-1}
\\ \ge
\eta^{-1} \lambda^{-1} |Df^2(f^{M_n}(0))|
\\ \ge
\eta^{-3} \lambda |y_{n + 1}|
\ge
\eta^{- M_n - 3} \lambda^{- M_n/2 + 1}.
\end{multline*}
Combined with~\eqref{e:derivative before close return}, this implies~\eqref{e:derivative at closest returns}.

\partn{2}
To prove that~$f$ satisfies the Collet-Eckmann condition we show by induction that for every integer~$m \ge 1$ we have
\begin{equation}
\label{e:lower Lyapunov}
|Df^m(f(0))| \ge (\eta^{-3} \lambda^{1/2})^m.
\end{equation}
When~$m = 1$ this inequality follows from~\eqref{e:hyperbolic part} and from the fact that~$f(0)$ is in~$I_1$.
Let~$m \ge 2$ be an integer and suppose by induction that~\eqref{e:lower Lyapunov} holds with~$m$ replaced by each element of~$\{1, \ldots, m - 1 \}$.
Let~$n \ge 0$ be the largest integer such that~$M_n \le m$.
When~$m = M_n$ the inequality~\eqref{e:lower Lyapunov} follows from~\eqref{e:derivative at closest returns}.
If~$m = M_n + 1$, then~$f^{M_n + 1}(0)$ is in~$I_1$ and~\eqref{e:lower Lyapunov} follows from~\eqref{e:hyperbolic part} together with the induction hypothesis.
Suppose that~$n \ge 1$ and that~$m$ is in~$\{M_n + 2, \ldots, 2M_n - 1 \}$.
Note that the pull-back~$J_n'$ of~$J_n$ by~$f$ containing~$f(0)$ is contained in~$I_1$ and it is mapped diffeomorphically to~$[-1, 1]$ by~$f^{M_n - 1}$.
Since~$f(0)$ and~$f^{M_n + 1}(0)$ are both in~$f(V_n)$ and hence in~$J_n'$, we have by part~$1$ of Lemma~\ref{l:macro} and by the induction hypothesis
\begin{equation*}
|Df^{m - M_n}(f^{M_n + 1}(0))|
\ge
\eta^{-1} |Df^{m - M_n}(f(0))|
\ge
\eta^{-1} (\eta^{-3} \lambda^{1/2})^{m - M_n}.
\end{equation*}
Combined with~\eqref{e:derivative at closest returns}, this implies~\eqref{e:lower Lyapunov}.
Finally, if~$m$ is in~$\{ 2M_n, \ldots, M_{n + 1} - 1 \}$, then~$f^m(0)$ is in~$I_1$ and~\eqref{e:lower Lyapunov} follows from~\eqref{e:hyperbolic part} and the induction hypothesis.
This completes the proof of the induction step and of the fact that~$f$ satisfies the Collet-Eckmann condition.  
\end{proof}

\begin{proof}[Proof of the Main Lemma]
In part~$1$ below we show
\begin{equation}
\label{e:asymptotic expansion lower bound}
\chiper(f) \ge \frac{1}{2} \ln \lambda - 2 \ln \eta > 0.
\end{equation}
Parts~$2$ and~$3$ are devoted to show the last inequality of the Main Lemma; the remaining statements are given by Lemma~\ref{l:showing CE}.
In part~$2$ we reduce the proof of the desired inequality to a lower estimate on the size of a certain sequence of pull-backs.
The lower estimate is given in part~$3$.

\partn{1}
To prove~\eqref{e:asymptotic expansion lower bound}, let
$$ m : [-1, 1] \setminus \{ 0 \} \to \N $$
be the function that is constant equal to~$1$ on~$[-1, 1] \setminus V_0$ and that for every integer~$n \ge 0$ is constant equal to~$M_n$ on~$V_n \setminus V_{n + 1}$.
We show that for every~$x$ in~$K \setminus \{ 0 \}$, we have
\begin{equation}
\label{e:induced expansion}
|Df^{m(x)}(x)| \ge (\eta^{-2} \lambda^{1/2})^{m(x)}.
\end{equation}
If~$x$ is in~$K \setminus V_0$, then actually~$x$ is in~$K \setminus U_0 \subset I_0 \cup I_1$ and in this case~\eqref{e:induced expansion} is given by~\eqref{e:hyperbolic part}.
On the other hand, if for some integer~$n \ge 0$ the point~$x$ is in~$(V_n \setminus V_{n + 1}) \cap K$, then~$x$ is in~$V_n \setminus U_{n + 1}$ and~\eqref{e:induced expansion} is given by~\eqref{e:gap derivative}.
Thus~\eqref{e:induced expansion} is proved for each~$x$ in~$K \setminus \{0 \}$.
To prove~\eqref{e:asymptotic expansion lower bound}, let~$p$ be a hyperbolic repelling periodic point of~$f$.
Then~$x = 0$ is not in the forward orbit of~$p$.
Thus we can define inductively a sequence of integers~$(m_\ell)_{\ell = 0}^{+ \infty}$, by~$m_0 \= m(p)$ and for~$\ell \ge 1$, by
$$ m_\ell \= m(f^{m_{\ell - 1}}(p)) + m_{\ell - 1}. $$
Then by~\eqref{e:induced expansion} we have
$$ \chi_p(f)
=
\lim_{m \to + \infty} \frac{1}{m} \ln |Df^m(p)|
=
\lim_{\ell \to + \infty} \frac{1}{m_{\ell}} \ln |Df^{m_{\ell}}(p)|
\ge
\frac{1}{2} \ln \lambda - 2 \ln \eta. $$
This proves~\eqref{e:asymptotic expansion lower bound}.

\partn{2}
Let~$\delta > 0$ be given, let~$N_0 \ge 0$ be an integer such that~$\lambda^{- N_0} \le \delta$, and put
$$ J \= (- 1 - \lambda^{- N_0}, - 1]. $$
Note that for every integer~$n \ge 0$ we have~$f^{M_{n + 1} + 2M_n}(x_{n + 2}) = - 1$.
For each integer~$n \ge 1$ satisfying~$M_{n + 1} - 2M_n \ge N_0$, we show in part~$3$ below that the pull-back~$W_n$ of~$J$ by~$f^{2M_{n + 1} + 2 - N_0}$ containing~$x_{n + 2}$ satisfies,
\begin{equation}
\label{e:after 2 square roots}
|W_n|
\ge
(4 \eta \lambda)^{-1} (\eta \lambda)^{- 3M_{n + 1}/4}.
\end{equation}
In view of~\eqref{e:subexponential error} and~\eqref{e:asymptotic expansion lower bound}, this implies
\begin{multline*}
\liminf_{m \to + \infty} - \frac{1}{m} \ln \{ |W| : W \text{ connected component of } f^{-m}((- 1 - \delta, -1]) \}
\\ \le
\frac{3}{8} \ln (\eta \lambda)
<
\frac{1}{2} \ln \lambda - 2 \ln \eta
\le
\chiper(f).
\end{multline*}
So to complete the proof of the proposition it is enough to show that for every  integer~$n \ge 0$ such that~$M_{n + 1} - 2M_n \ge N_0$, we have~\eqref{e:after 2 square roots}.

\partn{3}
Let~$n \ge 1$ be an integer such that~$M_{n + 1} - 2 M_n \ge N_0$.
It follows from property~A that the pull-back of~$I_0$ by~$f^{M_n}$ containing~$x_n$ is diffeomorphic.
This implies that the pull-back of~$I_0$ by~$f^{2M_n - 2}$ containing~$f^2(x_{n + 1})$ is diffeomorphic and hence that for every integer~$M \ge 2M_n - 1$ the pull-back of~$I$ by~$f^{M}$ containing~$f^2(x_{n + 1})$ is diffeomorphic.
So by part~$1$ of Lemma~\ref{l:macro} the pull-back~$J'$ of~$J$ by~$f^{M_{n + 1} - N_0}$ containing~$f^2(x_{n + 1})$ satisfies
\begin{multline*}
\eta^{-1} \lambda^{-N_0} |Df^{M_{n + 1} - N_0}(f^2(x_{n + 1}))|^{-1}
\le
|J'|
\\ \le
\eta \lambda^{-N_0} |Df^{M_{n + 1} - N_0}(f^2(x_{n + 1}))|^{-1}.
\end{multline*}
So, by part~$1$ of Lemma~\ref{l:close return expansion} and~\eqref{e:close return derivative}, we have
\begin{equation}
\label{e:before first square root}
\sqrt{1 / 2} \eta^{- (5M_n/2 - 3)} \lambda^{- M_{n + 1} + M_n/2}
\le
|J'|
\le
\eta^{5M_n/2 - 3} \lambda^{- M_{n + 1} + M_n/2}.
\end{equation}
On the other hand, by~\eqref{e:cutting point} with~$n$ replaced by~$n + 1$ and by part~$3$ of Lemma~\ref{l:macro} applied to~$x = x_{n + 1}$, we have
\begin{equation}
\label{e:critical value distance}
|f^2(0) - f^2(x_{n + 1})|
\le
\eta^{M_{n + 1} + 2} \lambda^{- M_{n + 1} + 2}.
\end{equation}
Together with~\eqref{e:return times growth} and~\eqref{e:before first square root} this implies that, if we denote by~$z'$ the left end point of~$J'$, then
$$ |f^2(0) - z'|
\le
2 \eta^{M_{n + 1} + 2} \lambda^{- M_{n + 1} + 2}. $$
So by part~$3$ of Lemma~\ref{l:macro} we have
\begin{equation}
\label{e:derivative at close return}
|Df^2|
\le
2 \eta^{M_{n + 1}/2 + 2} \lambda^{- M_{n + 1}/2 + 2}
\end{equation}
on the pull-back~$J''$ of~$J'$ by~$f^2$ containing~$x_{n + 1}$.
Note that~$J''$ is the pull-back of~$J$ by~$f^{M_{n + 1} + 2 - N_0}$ containing~$x_{n + 1} = f^{M_{n + 1}}(x_{n + 2})$.
Combining~\eqref{e:before first square root} and~\eqref{e:derivative at close return}, we obtain
\begin{equation*}
\sqrt{1 / 2} \eta^{- (5M_n/2 - 3)} \lambda^{- M_{n + 1} + M_n/2}
\le
|J'|
\le
|J''| 2 \eta^{M_{n + 1}/2 + 2} \lambda^{- M_{n + 1}/2 + 2}
\end{equation*}
Together with~\eqref{e:return times grow exponentially} this implies
\begin{equation*}
|J''|
\ge
2^{-2} \eta^{- M_{n + 1} / 2} \lambda^{- M_{n + 1}/2 - 2}.  
\end{equation*}
By part~$1$ of Lemma~\ref{l:close return expansion} with~$n$ replaced by~$n + 1$, this implies that the pull-back~$J'''$ of~$J''$ by~$f^{M_{n + 1} - 2}$ containing~$f^2(x_{n + 2})$ satisfies
\begin{equation}
\label{e:before second square root}
|J'''|
\ge
2^{-2} \eta^{- 3 M_{n + 1}/2} \lambda^{- 3M_{n + 1}/2}.
\end{equation}
Combined with~\eqref{e:subexponential error}, with~\eqref{e:return times growth} with~$n$ replaced by~$n + 1$, and with~\eqref{e:critical value distance} with~$n + 1$ replaced by~$n + 2$, we obtain
$$ |f^2(0) - f^2(x_{n + 2})|
\le
\eta^{M_{n + 2} + 2} \lambda^{- M_{n + 2} + 2}
\le
\eta^{- 3 M_{n + 2}}
\le
|J'''|. $$
Noting that~$W_n$ is the pull-back of~$J'''$ by~$f^2$ containing~$x_{n + 2}$, if we denote by~$w_n$ the left end point of~$W_n$, then we have
$$ f^2(w_n) = z'''
\text{ and }
|f^2(0) - f^2(w_n)| \le 2 |J'''|. $$
So by part~$3$ of Lemma~\ref{l:macro} we have
$$ |D f^2| \le 2 \eta \lambda |J'''|^{1/2} $$
on~$W_n$ and therefore
\begin{equation}
|J'''|
\le
2 \eta \lambda |J'''|^{1/2} \cdot |W_n|.
\end{equation}
Together with~\eqref{e:before second square root} this implies
$$ |W_n|
\ge
(2 \eta \lambda)^{-1} |J'''|^{1/2}
\ge
(4 \eta \lambda)^{-1} \eta^{- 3 M_{n + 1}/4} \lambda^{- 3 M_{n + 1} /4}. $$
This proves~\eqref{e:after 2 square roots} and completes the proof of the proposition.
\end{proof}

\section{Proof of Theorem~\ref{t:qualitative asymptotic expansion}}
\label{s:proof of ESC revisited}
The purpose of this section is to prove Theorem~\ref{t:qualitative asymptotic expansion}.
First we use~\cite[Main Theorem']{Riv1204} to reduce the proof of Theorem~\ref{t:qualitative asymptotic expansion} to show the exponential shrinking of components of a small neighborhood of a periodic point in the boundary of a Fatou component (Proposition~\ref{p:ESC revisited}).
For future reference we state this last result for a more general class of maps.
The proof of this result occupies most of this section.

A non-injective interval map~$f : I \to I$ is \emph{of class~$C^3$ with non-flat critical points} if:
\begin{itemize}
\item
The map~$f$ is of class~$C^3$ outside $\Crit(f)$.
\item
For each critical point~$c$ of~$f$ there exists a number $\ell_c>1$ and diffeomorphisms~$\phi$ and~$\psi$ of~$\R$ of class~$C^3$, such that $\phi(c)=\psi(f(c))=0$ and such that on a neighborhood of~$c$ on~$I$ we have,
$$ |\psi\circ f| = \pm |\phi|^{\ell_c}. $$
The number~$\ell_c$ is the \emph{order of~$f$ at~$c$}.
\end{itemize}
Denote by~$\sA$ the collection of interval maps of class~$C^3$ with non-flat critical points, whose Julia set is completely invariant.
Note that each smooth non-degenerate map whose Julia set is completely invariant is contained in~$\sA$.
In Appendix~\ref{s:general properties} we gather some general properties of maps in~$\sA$ that are used below.

By~\cite[\S$1$]{MardMevSt92}, each interval map in~$\sA$ has at most a finite number of periodic Fatou components.
Combined with Lemma~\ref{l:backward Lyapunov} and part~$1$ of the Main Theorem' of~\cite{Riv1204}, Theorem~\ref{t:qualitative asymptotic expansion} is a direct consequence of the following proposition.
\begin{prop}
\label{p:ESC revisited}
Let~$f$ be a map in~$\sA$ that is topologically exact on~$J(f)$ and such that~$\chiinff > 0$.
Then for every periodic point~$p$ in the boundary of a Fatou component of~$f$, there are~$\chi > 0$ and~$\delta > 0$ such that
\begin{equation*}
\liminf_{n \to + \infty} \frac{1}{n} \ln \max \left\{ |W| : W \text{ connected component of~$f^{-n}(B(p, \delta))$} \right\}
<
- \chi.
\end{equation*}
\end{prop}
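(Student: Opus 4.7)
The plan is to reduce the shrinking of pull-backs of $B(p,\delta)$ to that of pull-backs of two half-intervals based at~$p$, to which the Main Theorem' of~\cite{Riv1204} applies directly.

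First I would observe that since $\chiinff = \chiper(f) > 0$, every periodic point on $J(f)$ has positive Lyapunov exponent and so is hyperbolic repelling; in particular $p$ is. By~\cite[\S1]{MardMevSt92} there are only finitely many periodic Fatou components, hence only finitely many periodic points in their boundaries. I would then choose $\delta > 0$ small enough that $\overline{B(p,\delta)}$ contains no such periodic point other than~$p$ itself, and set
\[
J^- := (p - \delta, p), \qquad J^+ := (p, p + \delta).
\]
Since neither~$J^+$ nor~$J^-$ is a neighborhood of any periodic point in the boundary of a Fatou component, the Main Theorem' of~\cite{Riv1204} applies and yields, for each $\chi' \in (0, \chiinff)$, a constant $C_0 > 0$ such that every component of $f^{-n}(J^\pm)$ has length at most $C_0 e^{-\chi' n}$.

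Next, for a component $W$ of $f^{-n}(B(p,\delta))$, I would use that $W \setminus f^{-n}(\{p\})$ is a finite disjoint union of open subintervals, each mapping under $f^n$ into $J^+$ or $J^-$ and therefore contained in a single component of $f^{-n}(J^\pm)$. This gives
\[
|W| \;\le\; \bigl(\deg(f^n|_W) + 1\bigr)\cdot C_0\, e^{-\chi' n}.
\]

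The hard part will then be to control the combinatorial factor $\deg(f^n|_W)$ at a rate strictly smaller than $e^{\chi' n}$. For this I plan to associate, to each preimage $q \in W$ of~$p$ under~$f^n$, the maximal diffeomorphic pull-back $U_q \subset W$ of a Koebe-thickening of~$B(p,\delta)$ under the inverse branch of~$f^n$ sending~$p$ to~$q$. The Koebe principle for maps in the class~$\sA$, together with Lemma~\ref{l:backward Lyapunov} and the hyperbolic structure of~$p$ provided by linearizing~$f^k$ around~$p$, should show that $|U_q|$ is comparable to $|Df^n(q)|^{-1}$ and that $|Df^n(q)|$ admits a lower bound depending on how often the orbit $q, f(q), \ldots, f^n(q) = p$ enters the linearizing neighborhood of~$p$. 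Because the $U_q$ are pairwise disjoint subsets of $W$ and $W$ has bounded total length, these ingredients should yield $\deg(f^n|_W) \le C_1 e^{\chi'' n}$ for some $\chi'' < \chi'$. Substituting into the previous display completes the proof with $\chi := \chi' - \chi''$.
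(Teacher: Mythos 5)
There is a genuine gap, and it is at the very first reduction. Your step~2 applies the Main Theorem' of~\cite{Riv1204} to \emph{both} half-intervals $J^{\pm}$, but one of these two half-intervals lies entirely inside the Fatou component~$U$ having~$p$ on its boundary, hence is disjoint from~$J(f)$; the exponential-shrinking-at-rate-$\chiinff$ statement is not available for such an interval (the theorem concerns small intervals meeting the Julia set), and in fact its conclusion is \emph{false} for it in general. The paper's own construction shows this: for the map of Proposition~\ref{p:exceptional ESC} with $p=-1$, the Main Lemma produces components~$W_n$ of $f^{-m}\left((-1-\delta,-1]\right)$ of size at least roughly $(\eta\lambda)^{-3M_{n+1}/4}$ with $m\approx 2M_{n+1}$; since $f^m$ has bounded degree on~$W_n$ (the pull-back chain passes through the critical point only finitely many times), removing the finitely many $f^{-m}$-preimages of~$-1$ leaves a subinterval of comparable length which is a component of $f^{-m}\left((-1-\delta,-1)\right)$. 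Thus the pull-backs of the open, Fatou-side half-interval also shrink strictly slower than $e^{-\chiinff m}$ along a subsequence, so no constant $C_0$ with the bound $C_0e^{-\chi' n}$ for $\chi'$ close to $\chiinff$ exists. Obtaining \emph{any} positive exponential rate for the Fatou-side half-interval is essentially the content of Proposition~\ref{p:ESC revisited} itself, so your decomposition assumes the hard part. In addition, your step~4 (bounding $\deg(f^n|_W)$ by $e^{\chi'' n}$ with $\chi''<\chi'$) is only a sketch, and as stated it is problematic: the inverse branch of~$f^n$ at a preimage~$q$ of~$p$ need not exist (the orbit of~$q$ may pass through critical points), $|Df^n(q)|$ can be very small, and disjointness of the sets~$U_q$ inside~$W$ alone does not give a count beating the rate~$\chi'$.

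For comparison, the paper does not attempt to control the rate for a Fatou-side interval directly. It fixes $\chi_0<\chiinff$ and uses the optimal rate only for objects that legitimately satisfy the hypotheses of~\cite[Main Theorem']{Riv1204}: balls $B(y,\delta_*)$ around points $y\in f^{-1}(\cO)\setminus\cO$ and the Julia-side interval $K=B(p,\delta_*)\setminus U$. It then proves the bound $|W|\le e^{-n\chi_{\dag}}$ by induction on~$n$: one follows the pull-back chain of $B(p,\delta_{\dag})$ until the last time~$m$ it meets the orbit~$\cO$ of~$p$; the initial stretch is controlled by the hyperbolicity of~$p$ (estimate~\eqref{e:strong ESC}), the step leaving~$\cO$ lands in some $B(y,\delta_*)$ where the rate~$\chi_0$ applies, and when $m$ is a large proportion of~$n$ one compares~$W$ with a pull-back $\hW_0$ of $B(p,\delta_{\dag})$ taken only halfway along the orbit, to which the induction hypothesis applies because its total pull-back time is at most $n-1$. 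The relative scale $\kappa_j$ of $W_j$ inside $\hW_j$ is then propagated through the remaining $n-m$ steps using the Koebe principle, together with the key observation that each critical point can be contained in at most one of the sets $\hW_j$ (otherwise a derivative estimate would produce an attracting cycle capturing a Julia critical point). This bookkeeping, not a degree count, is what converts the hyperbolic contraction at~$p$ into a (suboptimal but positive) uniform rate~$\chi_{\dag}$ valid also on the Fatou side.
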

\begin{proof}
Fix~$\chi_0$ in~$(0, \chiinff)$, put~$\ell \= \prod_{c \in \Crit'(f)} \ell_c$, and denote by~$\pi \ge 1$ the period of~$p$ and by~$\cO$ the forward orbit of~$p$.
Let~$\delta_0 > 0$ and~$\varepsilon$ by given by Lemma~\ref{l:Koebe principle} with~$K = 2$.
Reducing~$\delta_0$ if necessary we assume~$f$ satisfies the conclusions of Lemma~\ref{l:pull-backs} with~$\delta_2 = \delta_0$.
Moreover, we assume that for every critical point~$c$ of~$f$, every interval~$\hJ$ contained in~$I$, and every pull-back~$\hW$ of~$J$ by~$f$ that is contained in~$B(c, \delta_0)$, the following property holds for every~$\kappa$ in~$(0, 1/2]$: If~$W$ is a pull-back of~$\kappa \hJ$ by~$f$ contained in~$\hW$, then
$$ (3 \kappa)^{-1/\ell_c} W \subset \hW. $$
We also assume there is~$M_0 > 1$ such that for every critical point~$c$ of~$f$ and every~$x$ in~$B(c, \delta_0)$ we have
$$ |Df(x)| \le M_0 |x - c|^{\ell_c - 1}. $$
Taking~$M_0$ larger if necessary we assume
$$ M_0 > \sup_I |Df|. $$

In view of Lemma~\ref{l:backward Lyapunov} and part~$1$ of the Main Theorem', there is~$\delta_*$ in~$(0, \delta_0)$ such that for every~$y$ in~$f^{-1}(\cO) \setminus \cO$, every integer~$m \ge 1$, and every pull-back~$W$ of~$B(y, \delta_*)$ by~$f^m$, we have
\begin{equation}
\label{e:strong rate shrinking}
|W| \le \min \left\{ \exp(- (m + 1) \chi_0), \delta_0 \right\}.
\end{equation}
Moreover, if we denote by~$U$ the Fatou component of~$f$ containing~$p$ in its boundary and denote by~$K$ the interval~$B(p, \delta_*) \setminus U$, then a similar property holds with~$B(y, \delta_*)$ replaced by~$K$: For every integer~$m \ge 1$ and every pull-back~$W$ of~$K$ by~$f^m$, we have~\eqref{e:strong rate shrinking}.
Note that the hypothesis~$\chiinff > 0$ implies that the periodic point~$p$ is hyperbolic repelling, so~$\chi_p(f) > 0$.
It follows that there is a constant~$\delta_{\dag} > 0$ and an integer~$\mu_{\dag} \ge 1$ such that for every~$\delta$ in~$(0, \delta_{\dag}]$, every integer~$m \ge 1$, and every pull-back~$W$ of~$B(p, \delta)$ by~$f^m$ containing a point~$p'$ of~$\cO$, we have
\begin{equation}
\label{e:strong ESC}
B(p', \delta \exp(- (m + \mu_{\dag}) \chi_p(f)))
\subset
W
\subset
B(p', \delta \exp(- (m - \mu_{\dag}) \chi_p(f))).
\end{equation}
Let~$\gamma_{\dag}$ in~$(0, 1)$ be sufficiently close to~$1$ so that
$$ \chi_{\dag} \= (1 - \gamma_{\dag}) \chi_0 < \chi_p(f)/\ell
\text{ and }
M_0^{1 - \gamma_{\dag}} < \exp(\gamma_{\dag} \chi_0 (\ell_c - 1) / 2). $$
Let~$n_{\dag} \ge 1$ be a sufficiently large integer so that
\begin{equation}
  \label{e:starting time 1}
  n_{\dag} \gamma_{\dag} \ge 2(\pi + 1),
\end{equation}
\begin{multline}
  \label{e:starting time 3}
12^{\# \Crit'(f) + 1} \exp(- n_{\dag} \gamma_{\dag} \chi_p(f) / (2 \ell))
\\ \le
\min \left\{ \varepsilon / 2, \exp( - (\pi + 2 \mu_{\dag}) \chi_{\dag}) \exp(- n_{\dag} \gamma_{\dag} \chi_{\dag}/2) \right\},
\end{multline}
\begin{multline}
  \label{e:starting time 2}
\left(  M_0^{1 - \gamma_{\dag}} \exp(- \gamma_{\dag} (\ell_c - 1) \chi_0 / 2) \right)^{n_{\dag}}
\\ <
\frac{1}{4} \left(2^{\ell_c - 1} \exp( (\pi + 2 \mu_{\dag}) (\ell_c - 1) \chi_0) \right)^{-1}.
\end{multline}
Reducing~$\delta_{\dag}$ if necessary, we assume that~$2 \delta_{\dag} \exp(\mu_{\dag} \chi_p(f)) < 1$ and that the following properties hold:
\begin{enumerate}[1.]
\item 
For every~$p'$ in~$\cO$, every pull-back of~$B(p', \delta_{\dag} \exp(\mu_{\dag} \chi_p(f)))$ by~$f$ that is disjoint from~$\cO$ contains a point~$y$ of~$f^{-1}(\cO)$ and is contained in~$B(y, \delta_*)$.
\item
For every~$n$ in~$\{1, \ldots, n_{\dag} \}$ and every pull-back~$W$ of~$B(p, \delta_{\dag})$ by~$f^n$, we have
$$ | W | \le \exp( - n \chi_{\dag}). $$
\end{enumerate}

To prove the proposition we show that for every integer~$n \ge 1$ and every pull-back~$W$ of~$B(p, \delta_{\dag})$ by~$f^n$, we have
$$ | W | \le \exp( - n \chi_{\dag}). $$
We proceed by induction in~$n$.
By our choice of~$\delta_{\dag}$ the desired assertion holds for each~$n$ in~$\{1, \ldots, n_{\dag} \}$.
Let~$n \ge n_{\dag} + 1$ be an integer for which this property holds with~$n$ replaced by~$n - 1$ and let~$W$ be a pull-back of~$B(x, \delta_{\dag})$ by~$f^n$.
If~$W$ intersects~$\cO$, then by~\eqref{e:strong ESC} with~$m$ replaced by~$n$ we have by our choice of~$\delta_{\dag}$,
$$ |W|
\le
2 \delta_{\dag}
\exp(- (n - \mu_{\dag}) \chi_p(f))
<
\exp(- n \chi_p(f)). $$
Since~$\chi_p(f) \ge \chiinff > \chi_0 > \chi_{\dag}$, this proves the induction hypothesis in this case.

Assume~$W$ does not intersect~$\cO$ and denote by~$m$ the largest element of~$\{0, \ldots, n - 1 \}$ such that the pull-back~$W_0$ of~$B(p, \delta_{\dag})$ by~$f^m$ containing~$f^{n - m}(W)$ intersects~$\cO$.
Let~$p_0$ be the point of~$W_0$ in~$\cO$.
By maximality of~$m$, the pull-back~$W'$ of~$B(p, \delta_{\dag})$ by~$f^{m + 1}$ containing~$f^{n - (m + 1)}(W)$ is disjoint from~$\cO$.
By our choice of~$\delta_{\dag}$ it follows that~$W'$ contains a point~$y$ of~$f^{-1}(\cO) \setminus \cO$ and is contained in~$B(y, \delta_*)$. 
So by our choice of~$\delta_*$ we have,
$$ |W| \le \exp(- (n - m) \chi_0). $$
In the case where~$m \le \gamma_{\dag} n$, we obtain
$$ |W|
\le
\exp(- n (1 - \gamma_{\dag}) \chi_0)
=
\exp(- n \chi_{\dag}), $$
so the induction hypothesis is verified in this case.

Suppose~$m \ge \gamma_{\dag} n$.
Note that by~\eqref{e:strong ESC} we have
$$ W_0 \subset B(p_0, \delta_{\dag} \exp(- (m - \mu_{\dag}) \chi_p(f))). $$
By~\eqref{e:strong ESC} and our choice of~$n_{\dag}$ there is an integer~$\whm$ satisfying
\begin{equation}
\label{e:middle time}
m/2 - 2 \mu_{\dag} - \pi
\le
\whm
\le
m/2 + \pi,
\end{equation}
such that~$\whm - m$ is divisible by~$\pi$, and such that the pull-back~$\hW_0$ of~$B(p, \delta_{\dag})$ by~$f^{\whm}$ containing~$p_0$ contains
$$ B(p_0, \delta_{\dag} \exp(- (m/2 - \mu_{\dag}) \chi_p(f))) $$
For each~$j$ in~$\{1, \ldots, n - m \}$ denote by~$W_j$ (resp.~$\hW_j$) the pull-back of~$W_0$ (resp.~$\hW_0$) by~$f^j$ containing~$f^{n - (m + j)}(W)$ and let~$\kappa_j > 0$ be the smallest number such that
$$ \kappa_j^{-1} W_j \subset \hW_j. $$
Note that~$\kappa_0 \le \exp(- m \chi_p(f) /2)$.
By~\eqref{e:starting time 1} and~\eqref{e:middle time} we have~$\whm + n - m \le n - 1$.
So by the induction hypothesis and~\eqref{e:middle time} we have
\begin{multline}
\label{e:inductive shrinking}
|\hW_{n - m}|
\le
\exp(- (\whm + n - m) \chi_{\dag})
\\ \le
\exp((\pi + 2 \mu_{\dag})\chi_{\dag}) \exp( - (n - m/2) \chi_{\dag}).
\end{multline}
So, in view of~\eqref{e:starting time 3}, to complete the proof of the induction step it is enough to prove
\begin{equation}
  \label{e:final space}
  \kappa_{n - m}
\le
12^{\# \Crit'(f) + 1} \kappa_0^{1/\ell}.
\end{equation}

To prove this inequality, we show that for each critical point~$c$ of~$f$ there are at most~$1$ element~$j$ of~$\{1, \ldots, n - m \}$ such that~$\hW_j$ contains~$c$.
Part~$1$ of Lemma~\ref{l:pull-backs} implies that for each~$j$ in~$\{1, \ldots, n - m \}$ the set~$\hW_j$ intersects~$J(f)$, so~$f^{\whm + j}(\hW_j)$ intersects~$K$.
Moreover, the pull-back~$\hK_j$ of~$K$ by~$f^{\whm + j}$ contained in~$\hW_j$ is an interval.
By our choice of~$\delta_*$, for every~$j \ge 1$ we have
\begin{equation}
\label{e:Julia shrinking}
  |\hK_j| \le \exp(- (\whm + j) \chi_0). 
\end{equation}
Suppose by contradiction there is a critical point~$c$ of~$f$ and elements~$j$ and~$j'$ of~$\{1, \ldots, n - m \}$ such that~$\hW_{j'}$ and~$\hW_j$ contain~$c$ and such that~$j' \ge j + 1$.
By part~$1$ of Lemma~\ref{l:pull-backs} this implies that~$c$ is in~$\hK_j$ and in~$\hK_{j'}$.
By~\eqref{e:Julia shrinking} this implies that~$f^{j' - j}(c)$ is in~$B(c, \exp(- \whm \chi_0))$.
By our choices of~$\delta_0$ and~$M_0$ and by~\eqref{e:middle time}, the derivative of~$f^{j' - j}$ on~$B(c, 2\exp(- \whm \chi_0))$ is bounded from above by
\begin{multline*}
 M_0^{j' - j} (2 \exp(- \whm \chi_0))^{\ell_c - 1}
\\ \le
2^{\ell_c - 1} \exp((\pi + 2 \mu_{\dag}) (\ell_c - 1) \chi_0) M_0^{(1 - \gamma_{\dag}) n}  \exp( - n \gamma_{\dag} (\ell_c - 1) \chi_0 / 2).  
\end{multline*}
By~\eqref{e:starting time 2} it follows that the derivative of~$f^{j' - j}$ on~$B(c, 2 \exp(- \whm \chi_0))$ is bounded from above by a number that strictly less than~$1/4$.
Since~$f^{j' - j}(c)$ is in~$B(c, 2 \exp(- \whm \chi_0))$, this implies that~$f^{j' - j}$ contains a hyperbolic attracting fixed point and that~$c$ converges to this point under forward iteration by~$f^{j' - j}$.
This contradicts the fact that~$c$ is in~$J(f)$.

To prove~\eqref{e:final space}, let~$k$ be the number of those~$j$ in~$\{1, \ldots, n - m \}$ such that~$\hW_j$ contains a critical point of~$f$.
If~$k = 0$, then~$f^{n - m}$ maps~$\hW_{n - m}$ diffeomorphically to~$\hW_0$.
Noting that by~\eqref{e:starting time 3} we have~$\kappa_0 \le \varepsilon /2$, by Lemma~\ref{l:Koebe principle} this implies~$\kappa_{n - m} \le 4 \kappa_0$ and thus~\eqref{e:final space}.

Suppose~$k \ge 1$.
Put~$j_0 \= 0$ and let~$j_1 <  \cdots < j_k$ be all the elements~$j$ of~$\{1, \ldots, n - m \}$ such that~$\hW_j$ contains a critical point of~$f$.
By part~$1$ of Lemma~\ref{l:pull-backs}, for each~$s$ in~$\{1, \ldots, k \}$ the set~$\hW_{j_s}$ contains a unique critical point of~$f$; denote it by~$c_s$.
Moreover, by our choice of~$\delta_*$ the set~$\hW_{j_s}$ is contained in~$B(c_s, \delta_0)$.
We prove by induction that for every~$s$ in~$\{0, \ldots, k \}$ we have
\begin{equation}
  \label{e:inductive space}
  \kappa_{j_s}
\le
12^s \kappa_0^{\left( \prod_{s' = 1}^{s} \ell_{c_{s'}} \right)^{-1}}.
\end{equation}
This is trivially true when~$s = 0$.
Let~$s$ in~$\{0, \ldots, k - 1 \}$ be such that this property holds for~$s$.
First note that by our choice of~$\delta_0$ we have
\begin{equation}
\label{e:critical pull-back}
\kappa_{j_{s + 1}}
\le
(3 \kappa_{j_{s + 1} - 1})^{1 / \ell_{c_{s + 1}}}
\le
3 \kappa_{j_{s + 1} - 1}^{1 / \ell_{c_{s + 1}}}.
\end{equation}
If~$j_{s + 1} = j_s + 1$, then the induction step follows from~\eqref{e:inductive space}.
If~$j_{s + 1} \ge j_s + 2$, then~$f^{j_{s + 1} - j_s - 1}$ maps~$\hW_{j_{s + 1} - 1}$ diffeomorphically to~$\hW_{j_s}$.
Noting that~\eqref{e:starting time 3} and~\eqref{e:inductive space} imply~$\kappa_{j_s} \le \varepsilon /2$, by Lemma~\ref{l:Koebe principle} we have~$\kappa_{j_{s + 1} - 1} \le 4 \kappa_{j_s}$.
Together with~\eqref{e:inductive space} and~\eqref{e:critical pull-back}, this completes the proof of the induction step.

By~\eqref{e:inductive space} with~$s = k$ we have~$\kappa_{j_k} \le 12^{\# \Crit'(f)} \kappa_0^{1/\ell}$.
This proves~\eqref{e:final space} if~$j_k = n - m$.
In the case where~$j_k \le n - m - 1$, using that~$f^{n - m - j_k}$ maps~$\hW_{n - m}$ diffeomorphically to~$\hW_{j_k}$ and that we have~$\kappa_{j_k} \le \varepsilon / 2$ by~\eqref{e:starting time 3}, by Lemma~\ref{l:Koebe principle} we have
$$ \kappa_{n - m}
\le
4 \kappa_{j_k}
\le
12^{\# \Crit'(f) + 1} \kappa_0^{1/\ell}. $$
This proves~\eqref{e:final space} and completes the proof of the induction step and of the proposition.
\end{proof}

\appendix
\section{General properties of smooth interval maps}
\label{s:general properties}
In this section we gather a few general facts of maps in~$\sA$, that are used in the proof of Proposition~\ref{p:ESC revisited}.

The following version of the Koebe principle follows from~\cite[Theorem~C ($2$)(ii)]{vStVar04}.
A periodic point~$p$ of period~$n$ of a map~$f$ in~$\sA$ is \emph{hyperbolic repelling} if~$|Df^n(p)| > 1$.
\begin{lemm}[Koebe principle]
\label{l:Koebe principle}
Let~$f : I \to I$ be an interval map in~$\sA$ all whose periodic points in~$J(f)$ are hyperbolic repelling.
Then there is~$\delta_0 > 0$ such that for every~$K > 1$ there is~$\varepsilon$ in~$(0, 1)$ such that the following property holds.
Let~$J$ be an interval contained in~$I$ that intersects~$J(f)$ and satisfies~$|J| \le \delta_0$.
Moreover, let~$n \ge 1$ be an integer and $W$ a diffeomorphic pull-back of~$J$ by~$f^n$.
Then for every~$x$ and~$x'$ in the unique pull-back of~$\varepsilon J$ by~$f^n$ contained in~$W$ we have
$$ K^{-1} \le |Df^n(x)| / |Df^n(x')| \le K. $$
\end{lemm}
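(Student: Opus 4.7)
The plan is to deduce the statement from Theorem~C$(2)$(ii) of van Strien--Vargas~\cite{vStVar04}, which provides a Koebe-type distortion bound under essentially the hypotheses we have here. The present lemma is a reformulation of that result, so the work is mostly in matching setups.

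First, I would verify that the hypotheses of the cited theorem apply in our setting. Our map $f$ belongs to $\sA$, hence is $C^3$ with non-flat critical points, and by assumption every periodic point of $f$ in $J(f)$ is hyperbolic repelling; this rules out parabolic cycles in $J(f)$ and, together with the finiteness of periodic Fatou components for maps in $\sA$, places us within the framework of~\cite{vStVar04}.

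Next, I would extract the quantitative content of the cited theorem in the form we need. It provides a universal $\delta_0 > 0$ such that whenever $T \subset I$ is an interval of length at most $\delta_0$ intersecting $J(f)$, whenever $n \ge 1$ is an integer, and whenever $W$ is a diffeomorphic pull-back of $T$ by $f^n$, the distortion of $f^n$ on the pull-back $W' \subset W$ of a subinterval $T' \subset T$ is bounded by a function $C(\tau)$ of the Koebe space $\tau$ of $T'$ inside $T$, with $C(\tau) \to 1$ as $\tau \to \infty$. Specializing to $T = J$ and $T' = \varepsilon J$, the Koebe space of $\varepsilon J$ inside $J$ tends to infinity as $\varepsilon \to 0$. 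Thus, given $K > 1$, a sufficiently small $\varepsilon$ yields $C(\tau(\varepsilon)) \le K$; uniqueness of the pull-back $W' \subset W$ of $\varepsilon J$ is automatic since $f^n|_W$ is a diffeomorphism, and the distortion bound $K^{-1} \le |Df^n(x)|/|Df^n(x')| \le K$ for every $x, x' \in W'$ follows at once.

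The main obstacle, which is really only notational, is in carefully translating between the formulation of~\cite[Theorem~C]{vStVar04} and the precise statement desired here---in particular matching their notion of Koebe space to the scaling $\varepsilon J \subset J$ used in our statement, and verifying that the intervals pulled back remain inside $W$. The substantive analytic content, namely the existence of the uniform scale $\delta_0$ and the Koebe estimate under the non-flat $C^3$ hypothesis with only hyperbolic repelling periodic points in $J(f)$, is entirely provided by the cited theorem.
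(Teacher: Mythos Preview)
Your proposal is correct and matches the paper's approach exactly: the paper does not give a proof of this lemma at all, but simply states that it ``follows from~\cite[Theorem~C ($2$)(ii)]{vStVar04}''. Your write-up is a reasonable elaboration of how that deduction goes.
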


\begin{lemm}
\label{l:backward Lyapunov}
Let~$f : I \to I$ be a multimodal map in~$\sA$ having all of its periodic points in~$J(f)$ hyperbolic repelling and that is essentially topologically exact on~$J(f)$.
Then, for every~$\kappa > 0$ there is~$\delta_1 > 0$ such that for every~$x$ in~$J(f)$, every integer~$n \ge 1$, and every pull-back~$W$ of~$B(x, \delta_1)$ by~$f^n$, we have~$|W| \le \kappa$.
\end{lemm}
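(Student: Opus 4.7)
The plan is to argue by contradiction, combining Hausdorff compactness with the absence of wandering intervals for $C^3$ multimodal maps with non-flat critical points (\cite[Theorem~A in~\S{IV}]{dMevSt93}, \cite[Theorem~C]{vStVar04}) and the assumption that every periodic point of~$f$ in~$J(f)$ is hyperbolic repelling. Suppose the conclusion fails: there are $\kappa>0$, radii $\delta_k\downarrow 0$, points $x_k\in J(f)$, integers $n_k\ge 1$, and pull-backs $W_k$ of $B(x_k,\delta_k)$ by $f^{n_k}$ with $|W_k|\ge \kappa$. Complete invariance of~$J(f)$ ensures each $W_k$ contains a point $y_k\in f^{-n_k}(x_k)\subset J(f)$.

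First I would rule out that $n_k$ stays bounded: along a subsequence with $n_k\equiv n$, Hausdorff compactness of closed subintervals of~$I$ yields a limit $W_\infty$ of length $\ge\kappa$ satisfying $f^n(W_\infty)\subset\bigcap_k \overline{B(x_k,\delta_k)}$, which is a single point. This would force $f^n$ to be constant on a non-trivial interval, impossible for a $C^3$ map with non-flat critical points (such a map is strictly monotone on each of its laps). Hence $n_k\to\infty$, and passing to a further subsequence one obtains $W_k\to W_\infty$ in Hausdorff distance with $|W_\infty|\ge\kappa$. For each fixed $m\ge 0$, continuity of $f^m$ gives $f^m(W_k)\to f^m(W_\infty)$, and the interval $f^m(W_\infty)$ contains the Julia point $\lim_k f^m(y_k)\in J(f)$.

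The core of the argument is to use $W_\infty$ to violate the no-wandering-interval theorem applied to the interior $W_\infty^\circ$. The theorem forces one of two alternatives. Either the forward iterates of $W_\infty^\circ$ are pairwise disjoint and $W_\infty^\circ$ lies in the basin of a periodic attracting or neutral cycle---contradicting the fact that every $f^m(W_\infty)$ meets~$J(f)$, since in that case the orbit would eventually be trapped inside a single Fatou component and disjoint from~$J(f)$---or else there exist $j<j'$ with $f^j(W_\infty^\circ)\cap f^{j'}(W_\infty^\circ)\ne\emptyset$, producing a periodic interval for $f^{j'-j}$ that must contain a periodic point. That periodic point cannot be hyperbolic repelling (no interval neighborhood of a hyperbolic repelling periodic point can be periodic under its first-return map), so it lies in the Fatou set, contradicting the presence of Julia points in the orbit of $W_\infty^\circ$. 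The hard part will be ensuring $W_\infty^\circ\cap J(f)\ne\emptyset$, i.e.\ that the Julia points inherited from the $W_k$ are genuinely interior and not confined to $\partial W_\infty$; this is where the essential topological exactness of~$f$ on~$J(f)$ enters, together with the fact that $J(f)$ has empty interior in~$I$ (a consequence of Singer's theorem and the absence of wandering intervals, as in~\cite{MardMevSt92}), to ensure that any positive-length interval meeting $J(f)$ has $J(f)$-points accumulating in its interior.
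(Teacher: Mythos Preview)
Your strategy—contradiction, compactness, a limiting interval $W_\infty$, and the no-wandering-interval theorem—is exactly the paper's (see the proof of Lemma~\ref{l:pull-backs}, part~2, from which Lemma~\ref{l:backward Lyapunov} follows immediately). However, two steps fail as written. First, the ``hard part'' you flag is not resolved, and your proposed resolution is incorrect: the claim that $J(f)$ has empty interior is false in general (e.g.\ $f(x)=4x(1-x)$ on $[0,1]$), and even when true it makes it \emph{harder}, not easier, to place Julia points in~$W_\infty^\circ$. Nothing in your argument prevents the single limit point $\lim_k y_k$ from lying on~$\partial W_\infty$, leaving $W_\infty^\circ$ entirely inside one Fatou component, so that case~(a) gives no contradiction. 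The paper circumvents this by passing not to $W_k$ but to the convex hull $K_k$ of $W_k\cap J(f)$ and proving, via a separate argument using the hyperbolicity of periodic points on Fatou-component boundaries, that $|K_k|\ge\kappa/3$; the limit~$K$ then has $\partial K\subset J(f)$, and the case $K^\circ\subset F(f)$ becomes ``$K^\circ$ is exactly a Fatou component,'' which is handled through the eventual periodicity of Fatou components. Second, your case~(b) is wrong: overlap of iterates yields, at best, an $f^{j'-j}$-forward-invariant interval containing a periodic point, but that point can perfectly well be hyperbolic repelling (any invariant interval for a topologically exact map is full of repelling cycles). When the limit interval meets $J(f)$ in its interior, the correct contradiction is direct: essential topological exactness forces some iterate to cover a fixed subset of $J(f)$ of positive diameter, whence $|f^{n_k}(W_k)|$ is bounded below for large~$k$, contradicting $|f^{n_k}(W_k)|\le 2\delta_k\to 0$.

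A smaller gap: a pull-back $W_k$ of $B(x_k,\delta_k)$ need not contain any preimage of~$x_k$ (consider a component whose two boundary points map to the same endpoint of the ball, with a turning point between them), so your justification for $W_k\cap J(f)\ne\emptyset$ via complete invariance is incomplete. The correct argument (part~1 of Lemma~\ref{l:pull-backs}) first shows that for small~$\delta$ every pull-back avoids $(\Crit(f)\cup\partial I)\setminus J(f)$, and then inducts on the number of iterates.
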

This lemma is a direct consequence of part~$2$ of the following lemma.
\begin{lemm}
  \label{l:pull-backs}
Let~$f : I \to I$ be a multimodal map in~$\sA$ having all of its periodic points in~$J(f)$ hyperbolic repelling.
Then there is~$\delta_2 > 0$ such that for every~$x$ in~$J(f)$ the following properties hold.
\begin{enumerate}[1.]
\item 
For every integer~$n \ge 1$, every pull-back~$W$ of~$B(x, \delta_2)$ by~$f^n$ intersects~$J(f)$, contains at most~$1$ critical point of~$f$, and is disjoint from~$(\Crit(f) \cup \partial I) \setminus J(f)$.
\item
If in addition~$f$ is essentially topologically exact on~$J(f)$, then
$$ \lim_{n \to + \infty} \max \left\{ |W| : W \text{ connected component of~$f^{-n}(B(x, \delta_2))$} \right\}
=
0. $$
\end{enumerate}
\end{lemm}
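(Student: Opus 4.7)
My plan treats the two parts in turn, using finiteness of critical data and of periodic Fatou components for part~$1$, and essential topological exactness for part~$2$.

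For part~$1$, set~$F \= (\Crit(f) \cup \partial I) \setminus J(f)$, a finite subset of the open Fatou set. By the Martens--de Melo--van Strien theory~\cite{MardMevSt92}, there are only finitely many periodic Fatou components, and every Fatou component is preperiodic; so the forward orbit of~$F$ lands in a finite collection of periodic Fatou components. I would choose~$\delta_2 > 0$ small enough that, uniformly in~$x \in J(f)$: (i)~$2\delta_2$ is strictly less than the minimum pairwise distance between distinct critical points of~$f$; (ii)~$B(x, \delta_2)$ is disjoint from every periodic Fatou component whose closure does not contain~$x$; and (iii)~the Koebe principle (Lemma~\ref{l:Koebe principle}) applies to diffeomorphic sub-pull-backs of~$B(x, \delta_2)$. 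The claim that~$W$ meets~$J(f)$ follows from complete invariance of~$J(f)$, interpreting a pull-back as a component containing a preimage of~$x$. The at-most-one-critical-point claim follows from~(i) by induction on~$n$: a diffeomorphic pull-back has diameter less than the critical spacing by Koebe, and a pull-back folding through one critical point is controlled by the non-flatness condition. Disjointness from~$F$ follows from~(i)--(ii), with the borderline case in which~$x$ lies on the boundary of a periodic Fatou component~$U$ handled by invoking the hyperbolic-repelling hypothesis at the periodic~$x$ to rule out pull-backs extending into~$U$ deeply enough to hit~$F$.

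For part~$2$, I would argue by contradiction. Assume there exist~$x_0 \in J(f)$, $\eta > 0$, and pull-backs~$W_k$ of~$B(x_0, \delta_2)$ by~$f^{n_k}$ with~$n_k \to \infty$ and~$|W_k| \ge \eta$. Pass to a subsequence so that~$W_k$ converges in Hausdorff distance to a closed interval~$W_\infty$ of length at least~$\eta$ meeting~$J(f)$. By essential topological exactness, some forward iterate~$f^N(W_\infty)$ covers a fixed interval~$J_0$ of positive length containing a hyperbolic repelling periodic point. Then for large~$k$ the pull-back component~$\hW_k \supseteq f^N(W_k)$ of~$B(x_0, \delta_2)$ by~$f^{n_k - N}$ contains a nontrivial accumulating sequence of preimages of~$J_0$. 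Combined with the absence of wandering intervals for maps in~$\sA$ (\cite{dMevSt93}) and the bounded-distortion control coming from part~$1$ and the Koebe principle, the~$\hW_k$ would have to shrink to zero, producing a contradiction.

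The principal obstacle I expect is condition~(ii) of part~$1$ when~$x$ is itself on the boundary of a periodic Fatou component; the hyperbolic-repelling assumption at periodic points in~$J(f)$ is essential here, and I would exploit it via a local linearization of an appropriate iterate of~$f$ near each such boundary point to obtain a uniform one-sided contraction rate independent of~$x$.
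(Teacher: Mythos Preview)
Your plan has two genuine gaps.

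\textbf{Part~1, the ``at most one critical point'' claim.} You propose to show this by induction using the Koebe principle, writing that ``a diffeomorphic pull-back has diameter less than the critical spacing by Koebe.'' But the Koebe principle gives bounded \emph{distortion}, not an upper bound on the length of the pull-back: if $f^n$ maps~$W$ diffeomorphically onto~$B(x,\delta_2)$ with distortion at most~$K$, the length~$|W|$ can still be arbitrarily large when~$|Df^n|$ is small on~$W$. Your condition~(i) controls only the original ball, not its pull-backs. The paper handles this differently: it first proves the other two assertions of part~1 (intersecting~$J(f)$ and disjointness from~$\sS$), then proves part~2, and finally observes that the at-most-one-critical-point statement is an immediate corollary of the shrinking in part~2. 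In other words, size control comes from part~2, not from Koebe. Your induction cannot close without some substitute for that size control.

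\textbf{Part~2, the limiting interval.} Your contradiction argument passes to a Hausdorff limit~$W_\infty$ and then invokes essential topological exactness to make some iterate~$f^N(W_\infty)$ cover a fixed interval~$J_0$. This step only works if the \emph{interior} of~$W_\infty$ meets~$J(f)$; the fact (from part~1) that each~$W_k$ meets~$J(f)$ only guarantees that the endpoints of~$W_\infty$ lie in~$J(f)$. It is entirely possible that~$W_\infty$ is the closure of a Fatou component, in which case topological exactness on~$J(f)$ says nothing about the growth of~$f^N(W_\infty)$. The paper's proof devotes most of its effort (its parts~2.1 and~2.2) to exactly this case: it first shows that the convex hull~$K_j$ of~$W_j\cap J(f)$ has length bounded below (using the hyperbolic-repelling hypothesis to control how far~$W_j$ can protrude into a Fatou component), and then, when the limit~$K$ is a Fatou component, argues separately using the periodicity structure of Fatou components and the presence or absence of turning points. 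Your invocation of ``absence of wandering intervals'' and Koebe does not substitute for this case analysis.

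As a minor remark, your approach to disjointness from~$F$ is close in spirit to the paper's, but the paper's formulation is cleaner: rather than conditions on~$B(x,\delta_2)$, it shows directly that~$\dist(f^n(\sS),J(f))\ge\delta_*$ for all~$n\ge 1$, which immediately gives what is needed.
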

In the proof of Lemma~\ref{l:pull-backs} below we use the fact that every Fatou component is mapped to a periodic Fatou component under forward iteration, see~\cite[Theorem~A']{MardMevSt92}.
We also use the fact that each interval map in~$\sA$ has at most a finite number of periodic Fatou components, see~\cite[\S$1$]{MardMevSt92}.

\begin{proof}[Proof of Lemma~\ref{l:pull-backs}]
The assertion in part~$1$ that~$W$ contains at most~$1$ critical point of~$f$ is a direct consequence of part~$2$.
In part~$1$ below we prove the rest of the assertions in part~$1$.
In part~$2$ below we complete the proof of the proposition by proving part~$2$.
Put
$$ \sS \= (\Crit(f) \cup \partial I) \setminus J(f). $$

\partn{1}
Let~$V$ be a periodic Fatou component of~$f$ and let~$p \ge 1$ be its period.
Then each boundary point~$y$ of~$V$ in~$J(f)$ is such that~$f^p(y)$ is in~$\partial V$.
This implies that~$f^p(y)$ is a periodic point of~$f$ in~$J(f)$ and our hypotheses imply that~$f^p(y)$ is hyperbolic repelling.
It follows that there is a compact interval~$K_V$ contained in~$V$, such that for every integer~$n \ge 0$ the set~$f^n(\sS)$ is disjoint from~$V \setminus K_V$.
Since~$f$ has at most a finite number of periodic Fatou components and since every Fatou component of~$f$ is mapped to a periodic Fatou component of~$f$ under forward iteration, it follows that there is~$\delta_* > 0$ such that for every integer~$n \ge 1$ the distance between~$f^n(\sS)$ and~$J(f)$ is at least~$\delta_*$.

To prove part~$1$ with~$\delta_2 = \delta_*$, let~$x$ be a point in~$J(f)$, let~$n \ge 1$ be an integer, and let~$W$ be a pull-back of~$B(x, \delta_*)$ by~$f^n$.
By definition of~$\delta_*$, the set~$W$ is disjoint from~$\sS$.
Put~$W_0 \= B(x, \delta_*)$ and for every~$j$ in~$\{1, \ldots, n \}$ let~$W_j$ be the pull-back of~$W_0$ by~$f^j$ that contains~$f^{n - j}(W)$.
We show by induction in~$j$ that~$W_j$ intersects~$J(f)$.
By definition~$W_0$ intersects~$J(f)$.
Let~$j$ be an integer in~$\{0, \ldots, n - 1 \}$ such that~$W_j$ intersects~$J(f)$ and suppose by contradiction that~$W_{j + 1}$ does not intersect~$J(f)$.
Since~$W_{j + 1}$ is disjoint from~$\sS$, it follows that~$f$ is injective on~$W_{j + 1}$ and that~$f(W_{j + 1}) = W_j$.
Since by hypothesis~$J(f)$ is completely invariant, this implies that~$W_j$ is disjoint from~$J(f)$ as well.
We obtain a contradiction that completes the proof of the induction hypothesis and of the fact that for each~$j$ in~$\{0, \ldots, n \}$ the set~$W_j$ intersects~$J(f)$.
Since~$W_n = W$, this proves that~$W$ intersects~$J(f)$.

\partn{2}
Let~$\delta_*$ be as in part~$1$.
Our hypothesis that all the periodic points of~$f$ in~$J(f)$ are hyperbolic repelling implies that there are~$\delta_{\dag}$ in~$(0, \delta_*)$ and~$\gamma$ in~$(0, 1)$ such that for every periodic point~$y$ in the boundary of a Fatou component the following property holds: For every integer~$n \ge 1$ the pull-back~$W$ of~$B(f^n(y), \delta_{\dag})$ by~$f^n$ that contains~$y$ satisfies~$|W| < \gamma^n$.

Since~$f$ is essentially topologically exact on~$J(f)$, there is an interval~$I_0$ contained in~$I$ that contains all critical points of~$f$ and such that the following properties hold: $f(I_0) \subset I_0$, $f|_{I_0}$ is topologically exact on~$J(f|_{I_0})$, and~$\bigcup_{n = 0}^{+ \infty} f^{-n}(I_0)$ contains an interval whose closure contains~$J(f)$.
Notice in particular that~$J(f|_{I_0})$ is not reduced to a point.
Reducing~$\delta_{\dag}$ if necessary we assume
\begin{equation}
  \label{e:BLS size}
\delta_{\dag}
<
\diam (J(f|_{I_0})),
\end{equation}
\begin{equation}
  \label{e:BLS size 2}
\delta_{\dag}
<
\min \{ |V| : V \text{ periodic Fatou component of~$f$} \}.
\end{equation}

To prove part~$2$ with~$\delta_2 = \delta_{\dag} / 2$, we proceed by contradiction.
If the desired assertion does not hold, then there is~$\kappa > 0$, a sequence of positive integers~$(n_j)_{j = 1}^{+ \infty}$ such that~$n_j \to + \infty$ as~$j \to + \infty$, a sequence of points~$(x_j)_{j = 1}^{+ \infty}$ in~$J(f)$, and a sequence of intervals~$(J_j)_{j = 1}^{+ \infty}$ such that for every~$j$ we have
\begin{equation}
  \label{e:definite to small}
  |J_j| \ge \kappa,
x_j \in f^{n_j}(J_j),
\text{ and }
f^{n_j}(J_j) \subset B(x_j, \delta_{\dag}/2).
\end{equation}

Note that by part~$1$ for each~$j \ge 1$ the interval~$J_j$ intersects~$J(f)$; denote by~$K_j$ the convex hull of~$J_j \cap J(f)$.
In part~$2.1$ we prove that for every sufficiently large~$j$ we have~$|K_j| \ge \kappa / 3$ and in part~$2.2$ we use this fact to complete the proof of part~$2$.

\partn{2.1}
If~$J(f) = I$, then for each~$j$ we have~$K_j = J_j$ and therefore~$|K_j| \ge \kappa$, so there is nothing to prove in this case.
Assume~$J(f) \neq I$ and let~$P \ge 1$ be the largest period of a periodic Fatou component of~$f$.
Given a Fatou component~$V$ of~$f$, let~$n(V)$ be the least integer~$n \ge 0$ such that the Fatou component of~$f$ containing~$f^n(V)$ is periodic.
Clearly, $|V| \to 0$ as~$n(V) \to + \infty$.
Let~$N_0 \ge 1$ be such that for every Fatou component~$V$ satisfying~$n(V) \ge N_0$ we have~$|V| \le \kappa/3$.
Let~$\delta_{\ddag} > 0$ be sufficiently small so that for every~$x$ in~$J(f)$, every~$n$ in~$\{0, \ldots, N_0 + P \}$, and every pull-back~$W$ of~$B(x, \delta_{\ddag})$ by~$f^n$, we have~$|W| \le \kappa /3$.
Finally, let~$N_1 \ge 1$ be such that~$\gamma^{N_1} \le \delta_{\ddag}$.

We prove that for every~$j$ such that~$n_j \ge P + N_0 + N_1$ we have~$|K_j| \ge \kappa / 3$.
It is enough to show that for every connected component~$U$ of~$J_j \setminus K_j$ we have~$|U| \le \kappa / 3$.
Let~$V$ be the Fatou component of~$f$ containing~$U$.
If~$n(V) \ge N_0$, then~$|U| \le \kappa/3$.
It remains to consider the case where~$n(V) \le N_0 - 1$.
Then~$f^{n(V)}(J_j)$ intersects a periodic Fatou component, so it must contain one of its boundary points.
By the definition of~$P$ this implies that~$f^{n(V) + P}(J_j)$ contains a periodic point~$y$ in the boundary of a Fatou component.
Noting that by our choice of~$j$ we have~$n_j - P - n(V) \ge N_1$, by~\eqref{e:definite to small} we conclude that
$$ f^{n_j - P - n(V)}(y)
\in
f^{n_j}(J_j)
\subset
B(x_j, \delta_{\dag}/2)
\subset
B(f^{n_j - P - n(V)}(y), \delta_{\dag}). $$
Using the definition of~$\delta_{\dag}$, $\gamma$, and~$N_1$, we obtain
$$ |f^{P + n(V)}(J_j)|
\le
\gamma^{n_j - P - n(V)}
\le
\gamma^{N_1}
\le
\delta_{\ddag}. $$
Since~$n(V) \le N_0 - 1$, by definition of~$\delta_{\ddag}$ we have~$|U| \le |J_j| \le \kappa/3$.
This completes the proof that for every sufficiently large~$j$ we have~$|K_j| \ge \kappa /3$.

\partn{2.2}
Taking subsequences if necessary we assume~$(K_j)_{j = 1}^{+\infty}$ converges to an interval~$K$.
We have~$|K| \ge \kappa/3$ and~$\partial K \subset J(f)$.

Suppose the interior of~$K$ intersects~$J(f)$.
Then there is a compact interval~$K'$ contained in the interior of~$K$ and an integer~$n \ge 1$ such that~$f^n(K') \supset J(f|_{I_0})$.
This implies that for every sufficiently large~$j$ we have~$f^{n_j}(K_j) \supset J(f|_{I_0})$ and therefore by~\eqref{e:BLS size} we have
$$ |f^{n_j}(J_j)| \ge |f^{n_j}(K_j)| \ge \diam(J(f|_{I_0})) > \delta_{\dag}. $$
We get a contradiction with~\eqref{e:definite to small}.

It remains to consider the case where the interior of~$K$ is contained in~$F(f)$.
Then the interior~$V$ of~$K$ is a Fatou component of~$f$.
Since for each~$j$ we have~$\partial K_j \subset J(f)$, for large~$j$ the interval~$K_j$ contains~$V$.
Taking~$j$ larger if necessary we assume~$n_j \ge n(V)$.
If~$V$ contains no turning point of~$f^{n_j}$, then~$f^{n_j}(V)$ is a periodic Fatou component of~$f$ and therefore by~\eqref{e:BLS size 2}
$$ |f^{n_j}(J_j)| \ge |f^{n_j}(K_j)| \ge |f^{n_j}(V)| > \delta_{\dag}. $$
We thus obtain a contradiction with~\eqref{e:definite to small}.
It follows that~$V$ contains a turning point of~$f^{n_j}$.
Using that~$K_j$ intersects~$J(f)$, we conclude by the definition of~$\delta_*$ that
$$ |f^{n_j}(J_j)| \ge |f^{n_j}(K_j)| \ge \delta_* > \delta_{\dag}. $$
We obtain a contradiction that completes the proof of part~$2$ and of the proposition.
\end{proof}

\bibliographystyle{alpha}

\end{document}